\documentclass[10pt,reqno]{amsart}

\usepackage{amsthm}
\usepackage{amsmath}
\usepackage{amssymb}
\usepackage{oldgerm}
\usepackage{graphicx}
\usepackage{mathrsfs}
\usepackage{enumerate}
\usepackage{MnSymbol}
\usepackage{tikz}
\usepackage{mathtools}
\usepackage{tikz-cd}

\theoremstyle{plain}
\newtheorem{thm}{Theorem}[section]
\newtheorem{lem}[thm]{Lemma}

\newtheorem{cor}[thm]{Corollary}
\theoremstyle{definition}
\newtheorem{defn}[thm]{Definition}

\newtheorem{rem}[thm]{Remark}

\newcommand*{\sheafhom}{\mathscr{H}\kern -.5pt om}

\newcommand{\pic}{\textnormal{Pic}}

\newcommand{\spann}{\textnormal{span}}

\newcommand{\defeq}{\vcentcolon=}
\newcommand{\sym}{\textnormal{Sym}}

\newcommand{\ord}{\textnormal{ord}}

\let\ker\relax
\newcommand{\ker}{\textnormal{Ker}}

\tikzset{node distance=1.2cm,auto}
\usetikzlibrary{matrix}

\begin{document}
\title[\parbox{11.5cm}{\centering{Maximal rank divisors on $\overline{\mathcal{M}}_{g,n}$}}]{Maximal rank divisors on $\overline{\mathcal{M}}_{g,n}$}
\author{\.Irfan Kadik\"oyl\"u}
\address{Humboldt-Universit\"at zu Berlin, Institut f\"ur Mathematik, 10099
Berlin}
\email{irfankadikoylu@gmail.com}
\maketitle
\begin{abstract}
We compute the class of the effective divisors on $\overline{\mathcal{M}}_{g,n}$, which are set theoretically equal to the locus of moduli points $[C,p_1,\dots ,p_n]$ where $C$ lies on a quadric under the map given by the linear series $|K_C-p_1-\dots -p_n|$. Using this divisor class we show that the moduli spaces $\overline{\mathcal{M}}_{16,8}$ and $\overline{\mathcal{M}}_{17,8}$ are of general type. We also note that the divisor classes computed in the papers \cite{FV1} and \cite{FV2} can be used to show that $\overline{\mathcal{M}}_{12,10}$ is of general type.
\end{abstract}

\section{Introduction}
Given a curve $C$ and a line bundle $L$ on it, one can consider the natural multiplication maps
\begin{equation*}
\mu_k: \sym^k H^0(C,L)\to H^0(C,L^{\otimes k}). 
\end{equation*}
Harris conjectured that these maps are of maximal rank (i.e. either injective or surjective) for general $C$ and $L$ in the range where the Brill-Noether number is non-negative \cite{Ha}. Although the general conjecture is still open, there are numerous partial results covering various cases of the conjecture. The cases where the dimension $r\defeq h^0(L)-1$ of the projective space is equal to $3,4$ or $5$, as well as the case of non-special curves (i.e. $r=\deg(L)-g$) has been verified by Ballico and Ellia (see \cite{BE} and references therein). Voisin proved the case where 
\begin{equation*}
\mathcal{O}_C(1)=K_C-A,
\end{equation*}
and $A$ is a pencil and using this, she was able to deduce the surjectivity of the Gaussian-Wahl map for generic curves \cite{V}. Farkas confirmed the conjecture when the Brill-Noether number is zero and the map $\mu_2$ is expected to be an isomorphism. By considering the locus of curves, where $\mu_2$ fails to be an isomorphism, he obtained the first infinite family of divisors in $\overline{\mathcal{M}}_g$ violating the slope conjecture \cite{Fa1}. Later, Ballico and Fontanari \cite{BF} proved the conjecture in the range, where
\begin{equation*}
\dim \sym^2 H^0(C,L)\geq \dim H^0(C,L^{\otimes 2}).
\end{equation*}
More recently, using the theory of limit linear series and degenerating to a chain of elliptic curves Liu, Osserman, Teixidor and Zhang managed to systematically treat many other cases of the conjecture (See \cite{LOTZ} for a precise statement). There is also a tropical proof of the case of quadrics by Jensen and Payne \cite{JP}.

In this paper, we use a construction analogous to \cite{Fa1} to obtain new divisor classes on $\overline{\mathcal{M}}_{g,n}$, which are singled out as the failure locus of the maximal rank conjecture. More precisely, we consider a map of vector bundles $\phi:E\to F$ over $\mathcal{M}_{g,n}$, which restricts at a moduli point $x=[C,p_1,\dots,p_n]$ to the multiplication map
\begin{equation}\label{restriction map}
\sym^2(H^0(K_C-p_1-\dots -p_n))\xrightarrow{\phi(x)} H^0(K_C^{\otimes 2}-2p_1-\dots -2p_n).
\end{equation}
We consider pairs 
\begin{equation}\label{g(t),n(t)}
\left(g(t),n(t)\right)=\left(\frac{1}{2}(t^2+5t+10),\frac{1}{2}(t^2+3t+2)\right)\textnormal{ for }t\in\mathbb{N},
\end{equation}
in which case the dimensions of both sides in (\ref{restriction map}) are equal. Since the maximal rank conjecture holds for quadrics, the locus where the vector bundle map $\phi$ fails to be an isomorphism is a divisor $\mathfrak{Quad}_{g(t),n(t)}$ in $\mathcal{M}_{g(t),n(t)}$. By taking its closure we obtain a divisor in $\overline{\mathcal{M}}_{g(t),n(t)}$ for every $t\in\mathbb{N}$.

The sequence of the pairs $g(t),n(t)$ has the following pattern:

\begin{table}[!htbp]
\centering
\label{tablo}
\begin{tabular}{|c|c|c|c|c|c|c|c|c|}
\hline
$t$ & 0 & 1 & 2  & 3  & 4  & 5  & 6  & ... \\ \hline
$g$ & 5 & 8 & 12 & 17 & 23 & 30 & 38 & ... \\ \hline
$n$ & 1 & 3 & 6  & 10 & 15 & 21 & 28 & ... \\ \hline
\end{tabular}
\end{table}

In section 2, we compute the class of $\overline{\mathfrak{Quad}}_{g(t),n(t)}$:

\begin{thm}\label{main}
The class of the divisor $\overline{\mathfrak{Quad}}_{g(t),n(t)}$ is given by the following formula:
\begin{equation*}
\overline{\mathfrak{Quad}}_{g(t),n(t)}=(8-t)\cdot\lambda+ t\cdot\sum_{j=1}^{n(t)}\psi_j-\delta_{irr}-\sum_{i,s\geq 0}b_{i:s}(t)\cdot\sum_{|S|=s}\delta_{i:S}
\end{equation*}
where
\begin{equation*}
b_{0:s}(t)=\frac{s}{2}(st+s+t-1)\textnormal{ for }s\geq 2,
\end{equation*}
\begin{equation*}
b_{1:0}(t)=t+4,\quad b_{1:s}(t)= \frac{1}{2} (s^2t+s^2 -st + s + 6) \textnormal{ for }s\geq 1,
\end{equation*}
\begin{equation*}
\textnormal{and }b_{i:s}(t)\geq 1 \textnormal{ for }2\leq i\leq g(t)/2\textnormal{ and }0\leq s\leq n(t).
\end{equation*}
\end{thm}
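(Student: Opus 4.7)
The plan is to realize $\overline{\mathfrak{Quad}}_{g(t),n(t)}$ as the degeneracy locus of a morphism $\phi\colon\sym^2 E\to F$ between two locally free sheaves of equal rank on an appropriate partial compactification of $\mathcal{M}_{g(t),n(t)}$, and then to compute its class as $c_1(F)-c_1(\sym^2 E)$, corrected by the twists needed to extend $\phi$ across each boundary divisor.

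On the interior I would take $E\defeq\pi_*(\omega_\pi(-\sum\sigma_j))$ and $F\defeq\pi_*(\omega_\pi^{\otimes 2}(-2\sum\sigma_j))$, where $\pi\colon\overline{\mathcal C}\to\overline{\mathcal M}_{g(t),n(t)}$ is the universal curve with tautological sections; Riemann--Roch on the fibre yields $\rk E=t+4$ and $\rk F=\binom{t+5}{2}=\rk\sym^2 E$, which is precisely what selects the pairs (\ref{g(t),n(t)}). Applying Grothendieck--Riemann--Roch to $\omega_\pi(-\sum\sigma_j)$ and $\omega_\pi^{\otimes 2}(-2\sum\sigma_j)$, together with the standard identities $\omega_\pi\cdot\sigma_j=\psi_j$, $\sigma_j^2=-\psi_j$ and Mumford's relation $\pi_*\omega_\pi^2=12\lambda-\delta$, should give $c_1(E)=\lambda-\sum_j\psi_j$ and $c_1(F)=13\lambda-5\sum_j\psi_j-\delta$, so that $c_1(F)-c_1(\sym^2 E)=(8-t)\lambda+t\sum_j\psi_j-\delta$. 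This already recovers the coefficients of $\lambda$, $\psi_j$ and $\delta_{irr}$ in the theorem.

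The remaining work is to compute the extra twist needed at each boundary divisor. Near a generic point $[C_1\cup_p C_2,p_1,\dots,p_n]$ of $\Delta_{i:S}$, I would pick a transverse one-parameter smoothing, write bases of $H^0(K_C-\sum p_j)$ and $H^0(2K_C-2\sum p_j)$ adapted to their orders of vanishing at the node on each component, and compute the minimum power of the smoothing parameter that must be factored out of $\phi$ so that it extends as a map of locally free sheaves; that power will be $b_{i:s}-1$. For $i=0$ the rational tail analysis is completely explicit: since every line bundle on $\mathbb{P}^1$ is understood, the order of vanishing can be read off directly, and the quadratic dependence on $s$ reflects the quadrics vanishing on the $s$-secant line that the tail sweeps out under $|K_C-\sum p_j|$. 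For $i=1$ the elliptic tail brings in a distinguished degree-$2$ line bundle $K_C|_{C_1}(-\sum_{j\in S}p_j)$ whose nature governs the computation; in particular the case $s=0$ must be split off because this bundle is then forced to be trivial, which is what produces the extra $+4$ in $b_{1:0}(t)$. For $i\geq 2$ one only needs the bound $b_{i:s}\geq 1$, which follows from exhibiting a single quadric through the limit image of $C$.

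The main obstacle will be the $i=1$ analysis, where several sources of excess vanishing interact simultaneously: the twists required for $E$ and $F$ to extend across $\Delta_{1:S}$, possible $h^0$-jumps of $K_C|_{C_1}(-\sum_{j\in S}p_j)$, and the dimension of the space of quadrics through the limiting embedded curve in $\mathbb{P}^{t+3}$. I would treat each ingredient on an explicit transverse smoothing and use limit linear series on $C_1\cup_p C_2$ to reduce the computation to linear algebra on the central fibre.
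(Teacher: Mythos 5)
Your skeleton --- degeneracy locus of $\sym^2 E\to F$, Grothendieck--Riemann--Roch and Porteous on the interior, then boundary-by-boundary corrections --- is the paper's, and your interior computation ($c_1(E)=\lambda-\sum_j\psi_j$, $c_1(F)=13\lambda-5\sum_j\psi_j-\delta$, ranks $t+4$ and $\binom{t+5}{2}$) is correct. But there is a genuine gap in how you propose to pass from this to the \emph{exact} coefficients. Porteous gives $[D_1(\phi)]=(8-t)\lambda+t\sum_j\psi_j-\delta$, which equals $\overline{\mathfrak{Quad}}_{g(t),n(t)}$ \emph{plus} a nonnegative multiple of every boundary divisor along which the (suitably twisted, locally free) extension of $\phi$ degenerates identically. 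So the GRR computation does not "already recover" the coefficient of $\delta_{irr}$: it only gives $b_{irr}\ge 1$, and equality requires proving that $\phi$ is generically non-degenerate over $\Delta_{irr}$, i.e.\ that for a general $[C,q_1,q_2,p_1,\dots,p_{n(t)}]$ of genus $g(t)-1$ the image of $C$ under $|K_C+q_1+q_2-\sum p_j|$ lies on no quadric. The same issue controls $b_{0:s}$: your "completely explicit" rational-tail analysis can at best determine the twist that makes the sheaves locally free along $\Delta_{0:S}$ (the quantity the paper calls $\tilde b_{0:s}$, obtained there by intersecting with a test curve where the attaching node moves), but the true coefficient is that number plus the generic degeneracy order along $\Delta_{0:S}$, and you must show this order is zero. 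That is a maximal-rank statement for $\sym^2 H^0(K_C-\sum a_jp_j)\to H^0(K_C^{\otimes 2}-2\sum a_jp_j)$ with general points and arbitrary multiplicities, which does not follow formally from the maximal rank conjecture for quadrics on smooth curves; the paper proves it by a chain of degenerations to reducible curves, using Hirschowitz's Lemme d'Horace to drop the ambient $\mathbb{P}^r$ step by step down to $\mathbb{P}^3$ and finishing with Green--Lazarsfeld in genus $3$. Your proposal contains no argument for either non-degeneracy statement.

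The gap is most serious over $\Delta_{1:S}$, which you rightly flag as the hard case but for the wrong reason: the problem is not merely that several sources of excess vanishing "interact", it is that the extended bundle map is degenerate along \emph{all} of $\Delta_{1:S}$, so the order of vanishing of $\det\phi$ computes $b_{1:s}$ only up to an unknown generic multiplicity. The paper gets around this by constructing, in the style of Eisenbud--Harris, a space of limit quadrics that agrees with $\mathfrak{Quad}$ over smooth curves and whose points over $\Delta_{1:S}$ are characterized by the failure of an explicit map $W_3(q,K_{C}-(s-1)q-\sum_{j\in S^c}p_j)\to H^0(K_{C}^{\otimes 2}-(2s+1)q-2\sum_{j\in S^c}p_j)$, where $W_3$ is the span of products of basis sections with total vanishing order $\ge 3$ at the node; the exact values then come from yet another non-degeneracy theorem (to rule out components inside the boundary), a pullback to $\overline{\mathcal{M}}_{1,2}$ giving $b_{1:0}(t)=t+4$ and $b_{1:1}(t)=4$, and a test-curve computation realized as a degeneracy locus on $C\times C$. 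Your sketch gestures at limit linear series but does not identify this mechanism, and without it the closed formula for $b_{1:s}(t)$ is out of reach. (As a small sanity check on your heuristic: the discrepancy between $b_{1:0}(t)=t+4$ and the $s\ge 1$ formula evaluated at $s=0$, which gives $3$, is $t+1$, not a constant $4$.)
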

In the case $t=0$ we recover a well known class: Given $[C,p]\in \mathcal{M}_{5,1}$, the linear system $|K_C-p|$ embeds $C$ to $\mathbb{P}^3$ and the existence of a quadric containing $C$ is equivalent to the existence of a $g^1_3$. Therefore, $\overline{\mathfrak{Quad}}_{5,1}$ is the pullback of the Brill-Noether divisor
\begin{equation*}
BN^1_{5,3}\defeq\{[C]\in\overline{\mathcal{M}}_5\mid W^1_3(C)\neq\emptyset\}
\end{equation*}
to $\overline{\mathcal{M}}_{5,1}$. It is well known that $BN^1_{5,3}$ has the class
\begin{equation*}
BN^1_{5,3} = 8\lambda - \delta_0 - 4\delta_1 - 6\delta_2,
\end{equation*}
see \cite{EH3}. Therefore, its pullback to $\overline{\mathcal{M}}_{5,1}$ has the class
\begin{equation*}
8\lambda - \delta_{irr} - 4\delta_{1:1} - 6\delta_{2:1} - 6\delta_{3:1} - 4\delta_{4:1},
\end{equation*}
which agrees with the formula for $\overline{\mathfrak{Quad}}_{5,1}$ in Theorem \ref{main}.

In section 3, we use the class $\overline{\mathfrak{Quad}}_{g(t),n(t)}$ to study the birational geometry of $\overline{\mathcal{M}}_{g,n}$. The moduli space $\overline{\mathcal{M}}_{g,n}$ is known to be of general type when  $g\geq 24$ (\cite{HM}, \cite{EH3}) and its birational type for various $g,n$ in the range $g\leq 23$ are established in the papers \cite{Lo} and \cite{Fa1}. Using the divisor $\overline{\mathfrak{Quad}}_{g(t),n(t)}$, we manage to treat some of the unknown cases of this problem:
\begin{thm}\label{GenTypeThms}
The moduli spaces $\overline{\mathcal{M}}_{16,8}, \overline{\mathcal{M}}_{17,8}$ and $\overline{\mathcal{M}}_{12,10}$ are of general type.
\end{thm}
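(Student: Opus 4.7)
The plan is to prove, in each case, that the canonical class $K_{\overline{\mathcal{M}}_{g,n}}$ is a big $\mathbb{Q}$-divisor. By Harris--Mumford and the pointed extensions of Ludwig and Farkas--Ludwig, the singularities of $\overline{\mathcal{M}}_{g,n}$ are canonical for $g\geq 4$, so pluricanonical forms extend to any desingularization and bigness of $K$ is equivalent to general type. Recall
\begin{equation*}
K_{\overline{\mathcal{M}}_{g,n}}=13\lambda+\sum_{j=1}^{n}\psi_{j}-2\delta_{irr}-3\delta_{1:\emptyset}-2\sum_{(i,S)\neq(1,\emptyset)}\delta_{i:S}.
\end{equation*}

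The strategy is to write $K_{\overline{\mathcal{M}}_{g,n}}$ as a positive $\mathbb{Q}$-linear combination of effective divisors. The main building blocks are the Quad-divisors from Theorem \ref{main} (used directly, or pulled back via a clutching section when the given pair is not of the form $(g(t),n(t))$), pullbacks of Brill--Noether divisors from $\overline{\mathcal{M}}_{g}$ (which contribute $\lambda$ without affecting the $\psi$-coefficients), and boundary divisors taken with nonnegative coefficients.

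For $\overline{\mathcal{M}}_{17,8}$: the pair $(17,10)$ equals $(g(3),n(3))$. I would fix a generic pointed rational curve $(\mathbb{P}^{1},q_{1},\ldots,q_{4})$ and use the induced attaching morphism $\iota\colon\overline{\mathcal{M}}_{17,8}\to\overline{\mathcal{M}}_{17,10}$ obtained by gluing $q_{4}$ to the eighth marked point. Then $\iota^{*}\overline{\mathfrak{Quad}}_{17,10}$ is an effective divisor on $\overline{\mathcal{M}}_{17,8}$ whose class is computable from Theorem \ref{main} using the standard pullback rules under clutching. For $\overline{\mathcal{M}}_{16,8}$: repeat the construction with a fixed $(1,4)$-pointed elliptic curve in place of the rational one, giving a map $\iota'\colon\overline{\mathcal{M}}_{16,8}\to\overline{\mathcal{M}}_{17,10}$ whose image lies in a boundary stratum of the form $\delta_{1:S}$; again pull back $\overline{\mathfrak{Quad}}_{17,10}$. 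For $\overline{\mathcal{M}}_{12,10}$: as the author notes, the divisor classes of \cite{FV1,FV2} provide the needed effective divisor directly, and one combines them with the analogous boundary and Brill--Noether ingredients.

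The main obstacle is the bookkeeping of boundary coefficients in the final decomposition of $K$. Theorem \ref{main} only gives the qualitative bound $b_{i:s}(t)\geq 1$ for $i\geq 2$, so to conclude one must verify that after combining $\iota^{*}\overline{\mathfrak{Quad}}_{17,10}$, Brill--Noether pullbacks, and positive boundary, the residual coefficients of all $\delta_{i:S}$ are indeed nonnegative. A secondary delicate point is the pullback computation under $\iota$ and $\iota'$: one must carefully track the behavior of $\lambda$, the $\psi_{j}$, $\delta_{irr}$ and the various $\delta_{i:S}$ classes so as to produce effective divisors on $\overline{\mathcal{M}}_{17,8}$ and $\overline{\mathcal{M}}_{16,8}$ with enough positivity in $\lambda$ and enough control of the boundary to dominate $K_{\overline{\mathcal{M}}_{g,n}}$.
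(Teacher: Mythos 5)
Your overall route is the same as the paper's: decompose $K_{\overline{\mathcal{M}}_{g,n}}$ as a positive combination of the big class $\sum_j\psi_j$, pullbacks of $\overline{\mathfrak{Quad}}_{17,10}$ under clutching maps $\overline{\mathcal{M}}_{16,8},\overline{\mathcal{M}}_{17,8}\to\overline{\mathcal{M}}_{17,10}$, a low-slope divisor pulled back from $\overline{\mathcal{M}}_g$, and boundary; and for $\overline{\mathcal{M}}_{12,10}$ use the Farkas--Verra classes. The only cosmetic difference is that you attach a single fixed $4$-pointed tail at one marked point (landing in a $\Delta_{0:S}$ resp.\ $\Delta_{1:S}$ with $|S|=3$), whereas the paper attaches two $2$-pointed tails at a pair of markings $\{i,j\}$ and averages over $\{i,j\}$ to symmetrize; either works, but you should still symmetrize over the choice of attachment point and, crucially, justify that the pullbacks are \emph{effective}. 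That is not automatic: it requires that the image of your clutching map is not contained in $\overline{\mathfrak{Quad}}_{17,10}$, which is precisely what Theorem \ref{Nondegenerateness over Delta 0:k} (for the rational tail, via the multiplication map twisted by $-3q$) and Theorems \ref{Extension over Delta 1:k} and \ref{Nondegenerateness over Delta 1:k} (for the elliptic tail, via the $W_3$ condition) are for. Your proposal asserts effectivity without this check, and for the elliptic-tail case the check genuinely needs the limit-linear-series analysis of Section 2, since a general point of $\Delta_{1:S}$ does not directly see the multiplication map (\ref{restriction map}).

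The one step that would actually fail as written is the $\overline{\mathcal{M}}_{16,8}$ case: you propose to supply the missing $\lambda$-positivity with ``pullbacks of Brill--Noether divisors from $\overline{\mathcal{M}}_g$,'' but there is no Brill--Noether divisor on $\overline{\mathcal{M}}_{16}$ because $g+1=17$ is prime (no $\rho=-1$ linear series exist). Moreover the numerics here are tight: the Quad-pullback $D_{16,8}$ has $\lambda$-to-$\delta_{irr}$ ratio $5$, so after reserving a positive multiple of $\sum\psi_j$ one needs an effective divisor on $\overline{\mathcal{M}}_{16}$ of slope strictly bigger than (roughly) $20/3$, and the paper uses Farkas's syzygy divisor $\mathcal{Z}_{16}=407\lambda-61\delta_{irr}-\dots$ from \cite{Fa2}, whose slope $407/61\approx 6.672$ clears the bound by a hair. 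Without identifying this specific divisor (or another of comparable slope) the decomposition of $K_{\overline{\mathcal{M}}_{16,8}}$ cannot be completed; ``verify the residual coefficients are nonnegative'' hides the fact that for the interior coefficients $\lambda,\psi_j,\delta_{irr}$ there is essentially no slack, while for the $\delta_{0:S}$ and $\delta_{1:S}$ coefficients one needs the explicit values $b_{0:s}(3)$ and $b_{1:s}(3)$ from Theorem \ref{main} (the crude bound $b_{i:s}\geq 1$ suffices only for $i\geq 2$).
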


This work is part of my PhD thesis. I am grateful to my advisor Gavril
Farkas and my coadvisor Angela Ortega for suggesting me this problem. My thanks also go to my colleagues in Humboldt University for many helpful discussions. I have been supported by Berlin Mathematical School during the preparation of this work.
\section{The computation of the class $\overline{\mathfrak{Quad}}_{g(t),n(t)}$}

As we already pointed out in the introduction, the divisor $\overline{\mathfrak{Quad}}_{g(t),n(t)}$ is defined as the closure of the degeneracy locus of a vector bundle map over $\mathcal{M}_{g(t),n(t)}$. To extend this degeneracy locus description to the boundary, we let
\begin{equation*}
\pi :\overline{\mathcal{M}}_{g(t),n(t)+1}\to\overline{\mathcal{M}}_{g(t),n(t)}
\end{equation*}
be the map that forgets the last marked point and $\mathscr{L}$ be the cotangent line bundle on $\overline{\mathcal{M}}_{g(t),n(t)+1}$. That is, $\mathscr{L}$ is naturally isomorphic to $\omega_C$ when restricted to the fiber $\pi^{-1}\left([C,p_1,\dots,p_{n(t)}]\right)$. We let $\phi$ denote the natural multiplication map
\begin{equation}\label{vbmap}
\sym^2\left(\pi_*\mathscr{L}(-\sum_{j=1}^{n(t)}\delta_{0:\{j,n(t)+1\}})\right)\xrightarrow{\phi} \pi_*\mathscr{L}^{\otimes 2}(-2\cdot\sum_{j=1}^{n(t)}\delta_{0:\{j,n(t)+1\}}).
\end{equation}
Over a moduli point $[C,p_1,\dots , p_{n(t)}]$ where $C$ is a smooth curve, the map $\phi$ restricts to the map (\ref{restriction map}) and therefore extends the degeneracy locus structure to the boundary. 

Note that if we have
\begin{equation*}
i<s \quad\textnormal{or}\quad g(t)-i<n(t)-s,
\end{equation*}
the evaluation map
\begin{equation*}
\pi_*\mathscr{L}\xrightarrow{ev} \pi_*\left(\mathscr{L}|_{\sum_{j=1}^{n(t)}\delta_{0:\{j,n(t)+1\}}}\right)
\end{equation*}
fails to be surjective over $\Delta_{i:S}$ (Here and in what follows we set $s\defeq |S|$). We will deal with such boundary components later, for now we restrict our attention to the partial compactification $\widetilde{\mathcal{M}}_{g(t),n(t)}$, which we define as the union of $\mathcal{M}_{g(t),n(t)}$ together with boundary divisors $\Delta_{i:S}$, where
\begin{equation*}
s\leq i \quad\textnormal{and}\quad n(t)-s\leq g(t)-i.
\end{equation*}
The sheaves in (\ref{vbmap}) are locally free over $\widetilde{\mathcal{M}}_{g(t),n(t)}$ away from loci of codimension at least 2. Therefore the first degeneracy locus $D_1(\phi)$ contains the divisor $\overline{\mathfrak{Quad}}_{g(t),n(t)}\cap \widetilde{\mathcal{M}}_{g(t),n(t)}$. We use Grothendieck-Riemann-Roch formula to compute its class and obtain the following result:
\begin{thm}\label{Lambda,Psi,Deltairr}
The coefficients of $\lambda,\psi_i$ and $\delta_{irr}$ in $\overline{\mathfrak{Quad}}_{g(t),n(t)}$ are $8-t, t$ and $-1$, respectively, Moreover, $b_{i:s}(t)\geq 1$ whenever $s\leq i$ and $n(t)-s\leq g(t)-i$.
\end{thm}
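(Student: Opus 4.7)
The strategy is a standard Porteous-style calculation followed by Grothendieck--Riemann--Roch. Writing $D_j \defeq \delta_{0:\{j,n(t)+1\}}$ and $D \defeq \sum_{j=1}^{n(t)} D_j$, on the partial compactification $\widetilde{\mathcal{M}}_{g(t),n(t)}$ both $\pi_*\mathscr{L}(-D)$ and $\pi_*\mathscr{L}^{\otimes 2}(-2D)$ are locally free away from codimension-two loci, of equal rank after applying $\sym^2$ on the source (the pairs $(g(t),n(t))$ were chosen precisely for this). Since $\overline{\mathfrak{Quad}}_{g(t),n(t)}$ is set-theoretically the first degeneracy locus of $\phi$, the Thom--Porteous formula gives, on $\widetilde{\mathcal{M}}_{g(t),n(t)}$,
\[
[\overline{\mathfrak{Quad}}_{g(t),n(t)}] = c_1\bigl(\pi_*\mathscr{L}^{\otimes 2}(-2D)\bigr) - (t+5)\,c_1\bigl(\pi_*\mathscr{L}(-D)\bigr),
\]
where the factor $t+5$ arises from $c_1(\sym^2 V) = (\rk V+1)\,c_1(V)$ combined with $\rk \pi_*\mathscr{L}(-D) = g(t)-n(t) = t+4$.

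To compute each $c_1(\pi_*\mathscr{L}^{\otimes k}(-kD))$ for $k = 1,2$, I would apply Grothendieck--Riemann--Roch to $\pi$. Identifying $\mathscr{L}$ with the relative dualizing sheaf $\omega_\pi$ and writing $\omega \defeq c_1(\omega_\pi)$, expanding the degree-two component of $\ch(\omega_\pi^{\otimes k}(-kD)) \cdot \mathrm{Td}(T_\pi)$ and pushing forward reduces the problem to the classical identities $\pi_*(\omega^2) = 12\lambda - \delta$ (Mumford), $\pi_*(\omega \cdot D_j) = \sigma_j^*\omega = \psi_j$, $\pi_*(D_j^2) = -\psi_j$, $D_j \cdot D_k = 0$ for $j \neq k$, and $\pi_*(c_2(\Omega_\pi)) = \delta_{irr} + \sum \delta_{i:S}$. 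The higher direct image $R^1\pi_*$ must be controlled separately: for $k = 2$ it vanishes by Serre duality because $n(t) < g(t)-1$, while for $k=1$ it is the trivial line bundle $\mathcal{O}_{\widetilde{\mathcal{M}}_{g(t),n(t)}}$ and contributes nothing to $c_1$.

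After cancellation, the coefficients of $\lambda$, $\psi_j$ and $\delta_{irr}$ in the resulting expression come out to $8-t$, $t$ and $-1$ respectively, while every boundary divisor $\delta_{i:S}$ lying inside $\widetilde{\mathcal{M}}_{g(t),n(t)}$ (i.e.\ with $s \leq i$ and $n(t)-s \leq g(t)-i$) appears with coefficient exactly $-1$, so that $b_{i:s}(t) = 1 \geq 1$. The main bookkeeping burden is tracking the $\psi_j$-corrections that enter through the comparison $\omega_\pi = \psi_{n(t)+1} - D$ at each stage of the expansion; the more conceptual obstacle is ensuring that the Thom--Porteous formula faithfully computes the full divisor class across $\widetilde{\mathcal{M}}_{g(t),n(t)}$, which is precisely what forces the codimension restrictions on $\Delta_{i:S}$ and motivates the separate analysis of $R^1\pi_*$.
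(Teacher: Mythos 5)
Your computational skeleton---Porteous with the factor $t+5=\textnormal{rank}+1$ coming from $\sym^2$, Grothendieck--Riemann--Roch for the two pushforwards, and the codimension-two control of local freeness on $\widetilde{\mathcal{M}}_{g(t),n(t)}$---is the same as the paper's, and the identities you quote do yield $[D_1(\phi)]=(8-t)\lambda+t\sum_j\psi_j-\delta$. The gap is in your very first display: you equate this Porteous class with $[\overline{\mathfrak{Quad}}_{g(t),n(t)}]$. That identification is false, because $\overline{\mathfrak{Quad}}_{g(t),n(t)}$ is by definition the closure of the degeneracy locus over the \emph{interior} $\mathcal{M}_{g(t),n(t)}$, whereas the degeneracy scheme $D_1(\phi)$ on $\widetilde{\mathcal{M}}_{g(t),n(t)}$ may contain entire boundary divisors along which $\phi$ is identically degenerate. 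The correct relation is
\begin{equation*}
[D_1(\phi)]=[\overline{\mathfrak{Quad}}_{g(t),n(t)}]+c_{irr}\cdot\delta_{irr}+\sum_{i,S}c_{i:S}\cdot\delta_{i:S},\qquad c_{irr},\,c_{i:S}\geq 0,
\end{equation*}
which gives only $b_{i:s}(t)=1+c_{i:S}\geq 1$---exactly the inequality the theorem asserts, and all that can be asserted at this stage. Your stronger conclusion that $b_{i:s}(t)=1$ throughout the allowed range actually contradicts Theorem \ref{main}: for instance $b_{1:0}(t)=t+4$ and $b_{1:1}(t)=4$, and both index pairs satisfy $s\leq i$ and $n(t)-s\leq g(t)-i$.

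The same conflation hides the one coefficient that does not come for free. To conclude that the $\delta_{irr}$-coefficient of $\overline{\mathfrak{Quad}}_{g(t),n(t)}$ is exactly $-1$ (rather than merely $\leq -1$) you must show $c_{irr}=0$, i.e.\ that $\phi$ is generically non-degenerate along $\Delta_{irr}$. This is not a formal consequence of Porteous; the paper proves it separately in Theorem \ref{Nondegenerateness over Delta irr} by degenerating to a union of a rational normal curve and a curve lying in a hyperplane and applying Lemma \ref{Horace} to reduce to a low-genus case. Your proposal contains no argument for this non-degeneracy, so as written it establishes the $\lambda$- and $\psi_j$-coefficients (which are insensitive to boundary corrections) and, once the decomposition above is stated correctly, the bound $b_{i:s}(t)\geq 1$, but not the $\delta_{irr}$-coefficient.
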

\begin{proof}
On $\widetilde{\mathcal{M}}_{g(t),n(t)}$ we have the exact sequence
\begin{align*}
0 &\rightarrow \pi_*\left(\mathscr{L}(-\sum_{j=1}^{n(t)}\delta_{0:\{j,n(t)+1\}})\right)\rightarrow \pi_*\mathscr{L}\xrightarrow{ev} \pi_*\left(\mathscr{L}|_{\sum_{j=1}^{n(t)}\delta_{0:\{j,n(t)+1\}}}\right)\rightarrow \\
&\rightarrow  R^1\pi_*\left( \mathscr{L}(-\sum_{j=1}^{n(t)}\delta_{0:\{j,n(t)+1\}})\right)\rightarrow  R^1\pi_*\mathscr{L}\rightarrow  0.
\end{align*}
It is easy to see that the evaluation map $ev$ is surjective in codimension $2$ in the range $s\leq i$ and $n(t)-s\leq g(t)-i$. Since $R^1\pi_*\mathscr{L}\cong \mathcal{O}$, it follows that
\begin{equation*}
R^1\pi_*\left(\mathscr{L}(-\sum_{j=1}^{n(t)}\delta_{0:\{j,n(t)+1\}})\right)
\end{equation*}
is isomorphic to $\mathcal{O}$ in codimension 2. Since the rank of $\pi_*\mathscr{L}(-\sum_{j=1}^{n(t)}\delta_{0:\{j,n(t)+1\}})$ is equal to $g(t)-n(t)=t+4$, we have that 
\begin{equation*}
c_1\left(\sym^2\left(\pi_*\mathscr{L}(-\sum_{j=1}^{n(t)}\delta_{0:\{j,n(t)+1\}})\right)\right)=(t+5)\cdot c_1\left(\pi_*\mathscr{L}(-\sum_{j=1}^{n(t)}\delta_{0:\{j,n(t)+1\}})\right).
\end{equation*}
From the exact sequence above, it follows immediately that
\begin{equation*}
c_1\left(\pi_*\mathscr{L}(-\sum_{j=1}^{n(t)}\delta_{0:\{j,n(t)+1\}})\right)=\lambda-\sum_{j=1}^{n(t)}\psi_j.
\end{equation*}
We use Grothendieck-Riemann-Roch formula to compute that
\begin{equation*}
c_1\left(\pi_*\mathscr{L}^{\otimes 2}(-2\cdot \sum_{j=1}^{n(t)}\delta_{0:\{j,n(t)+1\}})\right)=13\lambda -5\cdot \sum_{j=1}^{n(t)}\psi_j-\delta,
\end{equation*}
where $\delta$ denotes the class of the whole boundary. By Porteous formula we get that
\begin{equation*}
[D_1(\phi)]=(8-t)\cdot\lambda +t\cdot \sum_{j=1}^{n(t)}\psi_j-\delta.
\end{equation*}
The class $[D_1(\phi)]$ is equal to the sum of $\overline{\mathfrak{Quad}}_{g(t),n(t)}$ and positive multiples of the boundary components, over which the map (\ref{vbmap}) is degenerate. Thus we obtain the bound $b_{i:s}(t)\geq 1$ whenever $s\leq i$ and $n(t)-s\leq g(t)-i$. In Theorem \ref{Nondegenerateness over Delta irr}, we will prove that (\ref{vbmap}) is generically non-degenerate over $\Delta_{irr}$, which will imply that the coefficient of $\delta_{irr}$ is equal to $-1$.
\end{proof}
To obtain a bound for $b_{i:s}(t)$ in the case when $i<s$ or $g(t)-i<n(t)-s$, we modify the sheaves in (\ref{vbmap}) as follows:

We let
\begin{equation*}
\mathscr{L}'\defeq \mathscr{L}\left(-\sum_{j=1}^{n(t)}\delta_{0:\{j,n(t)+1\}} + \sum_{\substack{0\leq i\leq g(t)\\ i< s\\ |S| = s}}(i-s-1)\cdot\delta_{i:S\cup\{n(t)+1\}}\right),
\end{equation*}
and consider the natural map
\begin{equation}\label{vbmap2}
\sym^2\left(\pi_*\mathscr{L}'\right)\xrightarrow{\phi'}\pi_*\left(\mathscr{L}'^{\otimes 2}\right).
\end{equation}
Using Grauert's Theorem it can easily be confirmed that the dimension of fibers of $\pi_*\mathscr{L}'$ and $\pi_*\left(\mathscr{L}'^{\otimes 2}\right)$ stay constant over an open subset of $\overline{\mathcal{M}}_{g(t),n(t)}$, whose complement has codimension at least 2. Therefore in codimension 2, the map $\phi'$ is a map of vector bundles and is an extension of $\phi$.

To compute the class of the degeneracy locus $[D_1(\phi')]$, we will intersect it with simple test curves, whose intersection with the generators of $\pic(\overline{\mathcal{M}}_{g,n})$ we already know. To this end, we let
\begin{equation*}
[D,q',\{p_j\mid j\in S\}]\in \mathcal{M}_{i,S\cup\{q'\}}
\end{equation*}
and
\begin{equation*}
[C,\{p_j\mid j\in S^c\}]\in \mathcal{M}_{g(t)-i,S^c}
\end{equation*}
be general pointed curves and define the test curve $T_{i:S}$ as follows:
\begin{equation*}
T_{i:S}\defeq \{[C\cup_{q\sim q'} D,p_1,\dots,p_{n(t)}]\}_{q\in C},
\end{equation*}
that is, the point of attachment moves on the curve $C$. The intersection of $T_{i:S}$ with the standard divisor classes of $\overline{\mathcal{M}}_{g,n}$ can be computed using Lemma 1.4 in \cite{AC}. We note them here for readers convenience:
\begin{enumerate}[i)]
\item $T_{i:S}\cdot \psi_j = 1$ if $j\in S^c$,
\item $T_{i:S}\cdot \delta_{i:S\cup \{j\}} = 1$ if $j\in S^c$,
\item $T_{i:S}\cdot \delta_{i:S} = -(2(g(t)-i)-2+n(t)-s)$,
\end{enumerate}
and the intersection of $T_{i:S}$ with all other generators of $\pic(\overline{\mathcal{M}}_{g,n})$ equals zero.
\begin{lem}\label{Intersection Numbers}
For $0\leq i\leq g(t)$ and $i< s$, we have the following intersection numbers:
\begin{align*}
T_{i:S}\cdot c_1(\pi_*\mathscr{L}') &=-(i-s) \left((i-s-1) (g-i-1)+n-s\right),\\
T_{i:S}\cdot c_1\left(\pi_*(\mathscr{L}'^{\otimes 2})\right) &=-2 \left(i^2 (4 g+6 s+1)+i \left(-g (6 s+5)+3 n-2 s^2+5\right)\right)\\
&-2s (g (2 s+3)-2 n-3)+8 i^3.
\end{align*}
\end{lem}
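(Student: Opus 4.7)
The idea is to compute each intersection number directly on the pullback of the universal family to $T := T_{i:S}$. Identifying $T$ with $C$ via $q \mapsto [C \cup_q D, p_1, \ldots, p_n]$, the base-changed family $f\colon \mathcal{X} \to T$ is a reducible surface $\mathcal{X} = \mathcal{X}_C \cup \mathcal{X}_D$, where $\mathcal{X}_C \cong C \times T$ (fibered over $T$ by the second projection) records the $C$-side of each fiber, and $\mathcal{X}_D \cong D \times T$ is the constant family. The two components are glued by identifying the diagonal $\Delta_C \subset \mathcal{X}_C$ with the constant section $\{q'\} \times T \subset \mathcal{X}_D$.

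Next I would describe $\mathscr{L}'|_\mathcal{X}$ componentwise. By the standard formula for the relative dualizing sheaf of a family of nodal curves, $\omega_f|_{\mathcal{X}_C} = \mathrm{pr}_C^*\omega_C \otimes \mathcal{O}(\Delta_C)$ and $\omega_f|_{\mathcal{X}_D} = \mathrm{pr}_D^*\omega_D(q')$. The twist $-\sum_{j=1}^n \delta_{0:\{j,n+1\}}$ subtracts the marked-point sections $\sigma_j$ in whichever component contains $p_j$. The decisive term is $(i-s-1)\delta_{i:S\cup\{n+1\}}$: the whole component $\mathcal{X}_D$ lies in $\Delta_{i:S\cup\{n+1\}}$, so the restriction of the associated line bundle to $\mathcal{X}_D$ is the normal bundle of this boundary divisor. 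By the standard formula $c_1(N_{\Delta_{i:S\cup\{n+1\}}}) = -\psi_q - \psi_{q'}$, and since $\psi_{q'}$ is trivial on the constant $D$-family, this contributes on $\mathcal{X}_D$ a line bundle pulled back from $T \cong C$ of degree $-(2(g-i)-2)$. On $\mathcal{X}_C$, the same divisor meets $\mathcal{X}_C$ transversely along $\Delta_C$, giving simply $\mathcal{O}(\Delta_C)$. All remaining terms of $\sum_{i'<s'}(i'-s'-1)\delta_{i':S'\cup\{n+1\}}$ intersect $\mathcal{X}$ in codimension at least two; in particular, the boundary divisor on the $C$-side of the node violates the inequality $i'<s'$ (which there becomes $g-i < n-s+1$) in the current range, and so does not contribute.

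To extract the intersection numbers, I use the Leray identity
\begin{equation*}
\deg_T f_*\mathscr{L}' = \chi(\mathcal{X}, \mathscr{L}') + \chi(T, R^1 f_*\mathscr{L}') + (g(T)-1)\cdot\rk(f_*\mathscr{L}'),
\end{equation*}
combined with the isomorphism $R^1 f_*\mathscr{L}' \cong \mathcal{O}_T$ that follows from the discussion preceding Theorem \ref{Lambda,Psi,Deltairr}. The global Euler characteristic $\chi(\mathcal{X}, \mathscr{L}')$ decomposes via the Mayer--Vietoris sequence
\begin{equation*}
0 \to \mathscr{L}'|_\mathcal{X} \to \mathscr{L}'|_{\mathcal{X}_C} \oplus \mathscr{L}'|_{\mathcal{X}_D} \to \mathscr{L}'|_{\Delta_C} \to 0
\end{equation*}
into three contributions, each evaluable by K\"unneth and Riemann--Roch on the product surfaces $\mathcal{X}_C$, $\mathcal{X}_D$ and on the curve $\Delta_C \cong T$. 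Running the identical argument with $\mathscr{L}'^{\otimes 2}$ in place of $\mathscr{L}'$ produces the second formula. The main technical obstacle is the correct normal-bundle analysis of the non-transverse inclusion $\mathcal{X}_D \subset \Delta_{i:S\cup\{n+1\}}$ together with the proper tracking of the matching of the two component restrictions along $\Delta_C$; once these are set up, the rest is routine Riemann--Roch bookkeeping on product surfaces, and the explicit polynomial dependence on $g, i, s, n$ in the stated formulas emerges.
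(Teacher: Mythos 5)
Your strategy --- pull the universal family back to $T_{i:S}$, split the resulting reducible surface along the node locus by Mayer--Vietoris, and recover $\deg_T f_*\mathscr{L}'$ from $\chi(\mathcal{X},\mathscr{L}')$ via Leray and Riemann--Roch --- is a reasonable ``total space'' counterpart of the paper's argument, which performs the same decomposition one level up, on the pushforwards along the three clutching maps onto $\Delta_{i:S}$, $\Delta_{i:S\cup\{n+1\}}$ and their intersection. However, the proposal as written contains concrete errors that would change the final polynomial.

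First, the family over $T\cong C$ is not $(C\times T)\cup(D\times T)$: at the $n-s$ points where the moving attachment point $q$ collides with a marked point $p_j$, $j\in S^c$, the stable model acquires a rational bridge, so $\mathcal{X}_C$ must be the blow-up of $C\times C$ at the points $(p_j,p_j)$ and $\Delta_C$ its proper transform; the intersection numbers $T_{i:S}\cdot\delta_{i:S\cup\{j\}}=1$ quoted just before the lemma already record these degenerate fibres. Second, and as a consequence, your claim that every twist term other than $(i-s-1)\delta_{i:S\cup\{n+1\}}$ meets $\mathcal{X}$ in codimension at least two is false: for each $j\in S^c$ the pair $(i,S\cup\{j\})$ satisfies $i<s+1$, so $\delta_{i:S\cup\{j,n+1\}}$ occurs in $\mathscr{L}'$ with coefficient $i-s-2$, and its pullback to $\mathcal{X}$ is an honest divisor supported on the fibre over $q=p_j$ (the component $D\times\{p_j\}$ together with exceptional components). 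The paper's proof keeps precisely these terms --- they reappear as the summands $\sum_{j\in S^c}(i-s-2)\,\delta_{0:\{0,j\}}$ in the displayed Chern classes --- and they contribute nontrivially. Third, the normal-bundle degree you assign on $\mathcal{X}_D$ is wrong: $\psi_q$ restricted to the universal curve over $[C,\{p_j\}_{j\in S^c}]$ is $\omega_C(\sum_{j\in S^c}p_j)$, of degree $2(g-i)-2+(n-s)$ rather than $2(g-i)-2$ (compare the listed value $T_{i:S}\cdot\delta_{i:S}=-(2(g-i)-2+n-s)$). Since the target formula for $T_{i:S}\cdot c_1(\pi_*\mathscr{L}')$ carries an explicit $+(n-s)$ inside the bracket, systematically dropping these $(n-s)$-corrections cannot reproduce the stated identities; the framework is salvageable, but these three points must be repaired before the ``routine bookkeeping'' can begin.
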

\begin{proof}
The fiber of the bundle $\pi_*\mathscr{L}'$ over the point $[C\cup_{q\sim q'} D,p_1,\dots,p_{n(t)}]\in T_{i:S}$ is equal to the sections of
\begin{equation}\label{Fiber of the extension}
H^0\left(K_C+(i-s)q-\sum_{j\in S^c}p_j\right)\oplus H^0\left(K_D+(2-i+s)q'-\sum_{j\in S}p_j)\right)
\end{equation}
that are compatible at the node $q\sim q'$. To prove the lemma we need to globalize this fibral description. To this end, we define the clutching maps
\begin{align*}
\eta_{g-i}&:\overline{\mathcal{M}}_{g-i,S^c\cup\{n+1,0\}}\times \overline{\mathcal{M}}_{i,S\cup\{0\}}\to \overline{\mathcal{M}}_{g,n+1},\\
\eta_{i}&:\overline{\mathcal{M}}_{g-i,S^c\cup\{0\}}\times \overline{\mathcal{M}}_{i,S\cup\{n+1,0\}}\to \overline{\mathcal{M}}_{g,n+1},
\end{align*}
which are defined as the maps that identify the points with the labels $0$. Clearly they map onto the boundary divisors $\Delta_{i:S}$ and $\Delta_{i:S\cup\{n+1\}}$, respectively. These boundary divisors intersect at the locus where the point with the label $n+1$ hits the node and this locus is isomorphic to the image of the clutching map
\begin{equation*}
\eta_{\Sigma}:\overline{\mathcal{M}}_{g-i,S^c\cup\{0\}}\times \overline{\mathcal{M}}_{0,\{0,n+1,-1\}}\times \overline{\mathcal{M}}_{i,S\cup\{-1\}}\to \overline{\mathcal{M}}_{g,n+1},
\end{equation*}
which identifies the points with labels $0$ and $-1$, respectively. We also have maps from the domains of these 3 clutching maps to 
\begin{equation*}
\overline{\mathcal{M}}_{g-i,S^c\cup\{0\}}\times \overline{\mathcal{M}}_{i,S\cup\{0\}},
\end{equation*}
which are defined as the maps that forget the point with label $n+1$. We denote these maps by $\pi_{g-i}, \pi_{i}$ and $\pi_{\Sigma}$, respectively. In what follows (by abuse of notation) we denote by $\pi_*\mathscr{L}'$ the pullback of its restriction to $\Delta_{i:S}\subseteq \overline{\mathcal{M}}_{g,n}$ under the clutching map
\begin{equation*}
\overline{\mathcal{M}}_{g-i,S^c\cup\{0\}}\times \overline{\mathcal{M}}_{i,S\cup\{0\}} \to \Delta_{i:S}.
\end{equation*}
The bundle $\pi_*\mathscr{L}'$ sits in the following exact sequence
\begin{equation*}
0\to \pi_*\mathscr{L}'\to {\pi_{g-i}}_*\left(\eta_{g-i}^*\mathscr{L}'\right)\oplus {\pi_i}_*\left(\eta_{i}^*\mathscr{L}'\right) \to {\pi_{\Sigma}}_*\left(\eta_{\Sigma}^*\mathscr{L}'\right)\to 0.
\end{equation*}
Therefore, we have that
\begin{equation}\label{Chern Class equality}
c_1(\pi_*\mathscr{L}') = c_1\left({\pi_{g-i}}_*(\eta_{g-i}^*\mathscr{L}')\right) + c_1\left({\pi_{i}}_*(\eta_{i}^*\mathscr{L}')\right) - c_1\left({\pi_{\Sigma}}_*(\eta_{\Sigma}^*\mathscr{L}')\right).
\end{equation}
To prove the lemma we need to compute the intersection number of these Chern classes with $T_{i:S}$. These classes are elements of
\begin{equation*}
H^2(\overline{\mathcal{M}}_{g-i,S^c\cup\{0\}},\mathbb{Q})\oplus H^2(\overline{\mathcal{M}}_{i,S\cup\{0\}},\mathbb{Q}),
\end{equation*}
and classes belonging to the second direct summand clearly have $0$ intersection with the test curve $T_{i:S}$. Therefore it suffices to compute $H^2(\overline{\mathcal{M}}_{g-i,S^c\cup\{0\}},\mathbb{Q})$ part of the Chern classes appearing in the formula (\ref{Chern Class equality}) and their intersection with the test curve $T'_{i:S}\subseteq \overline{\mathcal{M}}_{g-i,S^c\cup\{0\}}$, which is defined by fixing a general element of $\overline{\mathcal{M}}_{g-i,S^c\cup\{0\}}$ and letting the point with label $0$ vary on the curve.

Using the formula
\begin{equation*}
c_1(\mathscr{L}) = \psi_{n+1} - \sum_{j=1}^n \delta_{0:\{j,n+1\}},
\end{equation*}
we first compute that
\begin{equation*}
c_1(\mathscr{L}') = \psi_{n+1}-2\sum_{j=1}^n\delta_{0:\{j,n+1\}}+(i-s-1)\cdot\delta_{i:S\cup\{n+1\}}+\sum_{j\in S^c}(i-s-2)\cdot\delta_{i:S\cup\{j,n+1\}}+\dots
\end{equation*}
(Here the ``dots" denote the classes, which have $0$ intersection with the test curve $T_{i:S}$ and hence are irrelevant to our computation.)

Using the pullback formulas in \cite{AC} and the fact that the map $\pi_{\Sigma}$ is an isomorphism, we compute that
\begin{equation*}
c_1\left({\pi_{\Sigma}}_*(\eta_{\Sigma}^*\mathscr{L}')\right) = -(i-s-1)\cdot \psi_0 +\sum_{j\in S^c} (i-s-2)\cdot\delta_{0:\{0,j\}}+\dots\in H^2(\overline{\mathcal{M}}_{g-i,S^c\cup\{0\}},\mathbb{Q}).
\end{equation*}
To compute $c_1\left({\pi_{i}}_*(\eta_{i}^*\mathscr{L}')\right)$, we observe that
\begin{equation*}
c_1(\eta_{i}^*\mathscr{L}') = -(i-s-1)\cdot \psi_0 +\sum_{j\in S^c}(i-s-2)\cdot \delta_{0:\{0,j\}}+\dots\in H^2(\overline{\mathcal{M}}_{g-i,S^c\cup\{0\}},\mathbb{Q}).
\end{equation*}
The restriction of the bundle ${\pi_{i}}_*(\eta_{i}^*\mathscr{L}')$ to $\overline{\mathcal{M}}_{g-i,S^c\cup\{0\}}$ is a trivial bundle twisted by this class. Therefore,
\begin{equation*}
c_1\left({\pi_{i}}_*(\eta_{i}^*\mathscr{L}')\right) = \textnormal{rank}({\pi_{i}}_*(\eta_{i}^*\mathscr{L}'))\cdot c_1(\eta_{i}^*\mathscr{L}').
\end{equation*}
From the fibral description (\ref{Fiber of the extension}), it is easy to see that $\textnormal{rank}({\pi_{i}}_*(\eta_{i}^*\mathscr{L}')) = 1$. Therefore, we have that
\begin{equation*}
T'_{i:S}\cdot c_1(\pi_*\mathscr{L}') = T'_{i:S}\cdot c_1\left({\pi_{g-i}}_*(\eta_{g-i}^*\mathscr{L}')\right).
\end{equation*}
To compute this last quantity, we use Grothendieck-Riemann-Roch formula. First we compute that
\begin{align*}
&c_1(\eta_{g-i}^*\mathscr{L}') = \psi_{n+1}-2\sum_{j\in S^c}\delta_{0:\{j,n+1\}}+(i-s-1)\cdot\delta_{0:\{0,n+1\}}+\\
&+\sum_{j\in S^c}(i-s-2)\cdot \delta_{0:\{0,j,n+1\}}+\dots\in H^2(\overline{\mathcal{M}}_{g-i,S^c\cup\{n+1,0\}},\mathbb{Q}).
\end{align*}
As in the proof of Theorem \ref{Lambda,Psi,Deltairr}, one can show that
\begin{equation*}
R^1{\pi_{g-i}}_*\left(\eta_{g-i}^*\mathscr{L}'\right)\cong \mathcal{O}.
\end{equation*}
Then a standard Grothendieck-Riemann-Roch computation yields that
\begin{equation*}
T'_{i:S}\cdot c_1\left({\pi_{g-i}}_*(\eta_{g-i}^*\mathscr{L}')\right) = -(i-s) \left((i-s-1) (g-i-1)+n-s\right).
\end{equation*}
The computation of $T_{i:S}\cdot c_1\left(\pi_*(\mathscr{L}'^{\otimes 2})\right)$ is done in the exact same way and we skip these details.
\end{proof}
\begin{thm}
We have that $b_{i:s}(t)\geq 1$ for $0\leq i\leq g(t)$ and $0\leq s\leq n(t)$.
\end{thm}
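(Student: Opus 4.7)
The plan is to combine Theorem \ref{Lambda,Psi,Deltairr} with a Porteous-plus-test-curve argument applied to the extended map $\phi'$ of (\ref{vbmap2}). A first reduction uses the divisor identification $\delta_{i:S} = \delta_{g(t)-i:S^c}$, which forces $b_{i:s}(t) = b_{g(t)-i:n(t)-s}(t)$. Together with Theorem \ref{Lambda,Psi,Deltairr} (which already covers $s \leq i$ and $n(t)-s \leq g(t)-i$), this reduces the remaining cases to the single unbalanced regime $i < s$, since the symmetric regime $g(t)-i < n(t)-s$ is exchanged with $i < s$ under $(i,S) \leftrightarrow (g(t)-i, S^c)$.

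For the range $i < s$, Porteous' formula applied to $\phi'$ — which is a map of vector bundles in codimension $2$ with $\rk(\pi_*\mathscr{L}') = t+4$ — yields
\[
[D_1(\phi')] = c_1\bigl(\pi_*(\mathscr{L}'^{\otimes 2})\bigr) - (t+5)\cdot c_1(\pi_*\mathscr{L}').
\]
Since $D_1(\phi')$ is effective and contains $\overline{\mathfrak{Quad}}_{g(t),n(t)}$, I would write $[D_1(\phi')] = \overline{\mathfrak{Quad}}_{g(t),n(t)} + E$ with $E = \sum c_{i':S'}\,\delta_{i':S'}$, $c_{i':S'} \geq 0$. Intersecting both sides with the test curve $T_{i:S}$, the left-hand side becomes an explicit polynomial $N(i,s,t)$ computed from Lemma \ref{Intersection Numbers} and the Porteous expression, while the right-hand side evaluates, via the intersection table for $T_{i:S}$, to
\[
t(n(t)-s) + (b_{i:s}(t) - c_{i:S})\bigl(2(g(t)-i)-2+n(t)-s\bigr) - b_{i:s+1}(t)(n(t)-s) + \sum_{j \in S^c} c_{i:S \cup \{j\}}.
\]

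Solving for $b_{i:s}(t)$ and inducting downward on $s$ from the base case $s = n(t)$ then yields the bound. At $s = n(t)$ the set $S^c$ is empty and the equation collapses to $b_{i:n(t)}(t) = c_{i:S} + N(i,n(t),t)/\bigl(2(g(t)-i)-2\bigr)$, so the bound $b_{i:n(t)}(t) \geq 1$ follows from $c_{i:S} \geq 0$ as soon as $N(i,n(t),t) \geq 2(g(t)-i)-2$, a direct numerical check against the formulas of Lemma \ref{Intersection Numbers}. For the inductive step $s < n(t)$, the prior bound $b_{i:s+1}(t) \geq 1$ and non-negativity of the $c$'s reduce the problem to dominating the negative contribution $-\sum_{j \in S^c} c_{i:S \cup \{j\}}$ by $N(i,s,t) - t(n(t)-s)$. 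The main obstacle I anticipate is precisely this control on the excess multiplicities $c_{i:S \cup \{j\}}$: it requires bootstrapping a uniform upper bound from the same recursion at level $s+1$, and the polynomial positivity estimate supplied by Lemma \ref{Intersection Numbers} must be strong enough to absorb it across the full admissible range $0 \leq i < s \leq n(t)$ and all $t \in \mathbb{N}$.
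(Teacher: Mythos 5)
Your setup coincides with the paper's: the same extended map $\phi'$ from (\ref{vbmap2}), the same decomposition $[D_1(\phi')]=\overline{\mathfrak{Quad}}_{g(t),n(t)}+\sum c_{i':S'}\cdot\delta_{i':S'}$ with non-negative excesses, the same test curves $T_{i:S}$, and the intersection identity you display is exactly the one the paper derives; the reduction to the single regime $i<s$ via $\delta_{i:S}=\delta_{g(t)-i:S^c}$ is also what the paper does implicitly. The gap is the one you flag yourself. Your downward induction is run on $b_{i:s}(t)$, so at level $s$ the inductive hypothesis gives a \emph{lower} bound on $b_{i:s+1}(t)$, whereas the recursion requires you to bound the combination $(n(t)-s)\,b_{i:s+1}(t)-\sum_{j\in S^c}c_{i:S\cup\{j\}}$ from below, i.e.\ an \emph{upper} bound on the excess multiplicities. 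Porteous and positivity give no such bound, and bootstrapping it from the recursion at level $s+1$ cannot work, because that recursion only ever pins down the difference $b-c$, never $c$ itself.

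That difference is precisely the fix. Since the construction is symmetric in the markings, the excesses depend only on $(i,s)$; setting $\tilde b_{i:s}(t)\defeq b_{i:s}(t)-d_{i:s}(t)$, your displayed identity collapses to
\begin{equation*}
T_{i:S}\cdot[D_1(\phi')]=(2g(t)-2i-2+n(t)-s)\,\tilde b_{i:s}(t)-(n(t)-s)\,\tilde b_{i:s+1}(t)+(n(t)-s)\,t,
\end{equation*}
a recursion that is \emph{closed} in the $\tilde b$'s. Starting at $s=n(t)$, where the unknown term has coefficient zero exactly as in your base case, and descending in $s$, it determines $\tilde b_{i:s}(t)$ uniquely as the explicit polynomial (\ref{Tilde b_{i:s}}), with the left-hand side supplied by Lemma \ref{Intersection Numbers}. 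Because $d_{i:s}(t)\geq 0$ gives $b_{i:s}(t)\geq\tilde b_{i:s}(t)$, the theorem then reduces to checking that this one polynomial is at least $1$ throughout the range $i<s$; the excess multiplicities never need to be estimated at all.
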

\begin{proof}
In Theorem \ref{Lambda,Psi,Deltairr} we have already shown that $b_{i:s}(t)\geq 1$ whenever $s\leq i$ and $n(t)-s\leq g(t)-i$. To deal with the remaining cases, we assume that $i<s$. We consider the degeneracy locus of the map (\ref{vbmap2}) and write the relation
\begin{equation*}
[D_1(\phi')]=\overline{\mathfrak{Quad}}_{g(t),n(t)}+\sum d_{i:s}(t)\cdot\delta_{i:S},
\end{equation*}
where $d_{i:s}(t)\geq 0$. By intersecting both sides of this equality with the test curve $T_{i:S}$, we obtain the relation
\begin{equation*}
T_{i:S}.[D_1(\phi')]=(2g(t)-2i-2+n(t)-s)\tilde{b}_{i:s}(t)-(n(t)-s)\tilde{b}_{i:s+1}(t)+(n(t)-s)t,
\end{equation*}
where $\tilde{b}_{i:s}(t)\defeq b_{i:s}(t)-d_{i:s}(t)$. Since, $b_{i:s}(t)\geq \tilde{b}_{i:s}(t)$ it suffices to prove that $\tilde{b}_{i:s}(t)\geq 1$. Using Lemma \ref{Intersection Numbers} we solve this equation and obtain that
\begin{equation}\label{Tilde b_{i:s}}
\tilde{b}_{i:s}(t) = \frac{1}{2} \left(i^2 (t-3)-i (2 s (t-1)+t-5)+s (s t+s+t-1)\right).
\end{equation}
It is elementary to check that this quantity is always greater than 1.
\end{proof}
The vector bundle map (\ref{vbmap2}) is degenerate over most of the boundary divisors in $\overline{\mathcal{M}}_{g(t),n(t)}$, but it is actually generically non-degenerate over $\Delta_{0:S}$. To see this, first note that the fiber of (\ref{vbmap2}) over a general element of the test curve $T_{0:S}$ has the form
\begin{equation*}
\sym^2\left(H^0(K_C-s\cdot q-\sum_{j\in S^c}p_j)\right)\xrightarrow{\phi'}H^0\left(K_C^{\otimes 2}-2s\cdot q-2\sum_{j\in S^c}p_j\right).
\end{equation*}
In Theorem \ref{Nondegenerateness over Delta 0:k} we will prove that this map is an isomorphism if the pointed curve
\begin{equation*}
[C,q,\{p_j\mid j\in S^c\}]\in \mathcal{M}_{g(t),S^c\cup\{q\}}
\end{equation*}
is general. We first state a lemma known as ``Lemme d'Horace", which we will be using in the proof of Theorem \ref{Nondegenerateness over Delta 0:k}.
\begin{lem}\label{Horace}
Let $H\subseteq \mathbb{P}^r$ be a hyperplane and $X,Y\subseteq \mathbb{P}^r$ be reduced subschemes such that $Y\subseteq H$ and no irreducible component of $X$ lies in $H$. Then for any integer $m\geq 1$, one has a short exact sequence of ideal sheaves
\begin{equation*}
0\to \mathcal{I}_{X/\mathbb{P}^r}(m-1)\to \mathcal{I}_{X\cup Y/\mathbb{P}^r}(m)\to \mathcal{I}_{(X\cup Y)\cap H/H}(m)\to 0.
\end{equation*}
\end{lem}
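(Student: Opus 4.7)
The plan is to construct the two maps explicitly and verify exactness. Let $h \in H^0(\mathbb{P}^r, \mathcal{O}_{\mathbb{P}^r}(1))$ be a linear form cutting out the hyperplane $H$. I would define the left-hand arrow as multiplication by $h$, sending a local section $f$ of $\mathcal{I}_{X/\mathbb{P}^r}(m-1)$ to $hf$; since $h$ vanishes on $Y \subseteq H$ and $f$ vanishes on $X$, the product $hf$ vanishes on $X \cup Y$ and hence lies in $\mathcal{I}_{X \cup Y/\mathbb{P}^r}(m)$. The right-hand arrow is the restriction to $H$, sending $f$ to $f|_H$; since $f$ vanishes on $X \cup Y$, its restriction vanishes on $(X \cup Y) \cap H$.

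Injectivity on the left is immediate from torsion-freeness of $\mathcal{O}_{\mathbb{P}^r}$, and the composition is zero because $h|_H = 0$, so the sequence is a complex. For exactness at the middle, suppose a local section $f \in \mathcal{I}_{X \cup Y/\mathbb{P}^r}(m)$ restricts to zero on $H$; then $f = hg$ for some $g \in \mathcal{O}_{\mathbb{P}^r}(m-1)$. Since $f$ vanishes on $X$ and $h$ is a non-zero-divisor in each local ring of $\mathcal{O}_X$ (because $X$ is reduced with no component contained in $H$), the section $g$ must vanish on $X$, giving $g \in \mathcal{I}_{X/\mathbb{P}^r}(m-1)$.

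The main task is proving surjectivity on the right, which is a local question. Given a local section $g$ of $\mathcal{I}_{(X \cup Y) \cap H/H}(m)$, I would first lift it arbitrarily to a section $\tilde{g}$ of $\mathcal{O}_{\mathbb{P}^r}(m)$. Since $Y \subseteq H$ and $Y \subseteq (X \cup Y) \cap H$, the restriction $\tilde{g}|_Y$ coincides with $g|_Y = 0$, so $\tilde{g}$ already vanishes on $Y$. To correct $\tilde{g}$ so that it also vanishes on $X$, the hypothesis on $X$ yields a short exact sequence
\begin{equation*}
0 \to \mathcal{O}_X(m-1) \xrightarrow{\cdot h} \mathcal{O}_X(m) \to \mathcal{O}_{X \cap H}(m) \to 0
\end{equation*}
on $X$. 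Since $\tilde{g}|_{X \cap H} = g|_{X \cap H} = 0$, the restriction $\tilde{g}|_X$ lies in the image of multiplication by $h$, so $\tilde{g}|_X = h \cdot k'$ for some $k' \in \mathcal{O}_X(m-1)$. Lifting $k'$ to a section $k$ of $\mathcal{O}_{\mathbb{P}^r}(m-1)$, the corrected section $\tilde{g} - hk$ restricts to $g$ on $H$ and vanishes on all of $X \cup Y$.

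The main obstacle is ensuring that the correction step actually works; everything hinges on the hypothesis that no component of $X$ is contained in $H$, which makes $h$ a non-zero-divisor in the structure sheaf of the reduced scheme $X$ and thereby supplies the exact sequence used above.
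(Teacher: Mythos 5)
Your argument is correct and complete; note that the paper itself does not prove this lemma at all but simply cites Hirschowitz, so there is no in-text proof to compare against. Your write-up is the standard direct verification of the Horace exact sequence: the two essential points are exactly the ones you isolate, namely that the hypotheses make $h$ a non-zero-divisor on $\mathcal{O}_X$ (which gives both exactness in the middle and the sequence $0 \to \mathcal{O}_X(m-1) \xrightarrow{\cdot h} \mathcal{O}_X(m) \to \mathcal{O}_{X\cap H}(m) \to 0$ used in the correction step), and that $Y \subseteq H$ lets the factor $h$ absorb the vanishing along $Y$. The only point worth flagging is a matter of convention rather than a gap: for the restriction map to be surjective and for the step ``$\tilde{g}|_{X\cap H} = 0$'' to go through, the symbol $\mathcal{I}_{(X\cup Y)\cap H/H}$ must denote the ideal of the scheme-theoretic trace, i.e.\ the image of $\mathcal{I}_{X\cup Y/\mathbb{P}^r}$ in $\mathcal{O}_H$, rather than the ideal of the reduced set-theoretic intersection; this is the standard convention in Hirschowitz's lemma and is implicitly what your proof uses, but it deserves a sentence. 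With that understanding, your local lifting-and-correction argument for surjectivity and your use of reducedness plus the no-component-in-$H$ hypothesis to identify the residual scheme with $X$ are exactly right.
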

\begin{proof}
See \cite{Hi}.
\end{proof}
\begin{thm}\label{Nondegenerateness over Delta 0:k}
Let $C$ be a general curve of genus $g(t)$ and $p_1,\dots , p_k$ general points on $C$. Let $a_1,\dots , a_k$ be natural numbers such that $\sum_{j=1}^k a_j=n(t)$. Then the multiplication map
\begin{equation*}
\sym^2 H^0(K_C-\sum_{j=1}^k a_jp_j)\to H^0(K_C^{\otimes 2} - \sum_{j=1}^k 2a_jp_j)
\end{equation*}
is an isomorphism.
\end{thm}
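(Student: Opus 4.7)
The plan is to prove the map is an isomorphism by reformulating the statement as a vanishing of $H^0(\mathcal{I}_C(2))$ for the projective embedding of $C$ via $|K_C - \sum_j a_j p_j|$, then inducting on $t$ using Lemme d'Horace.

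To set up, I would first check that the domain and codomain both have dimension $(t+4)(t+5)/2$. Writing $L \defeq K_C - \sum_j a_j p_j$, Riemann-Roch together with $h^1(L) = h^0(\sum_j a_j p_j) = 1$ for a general divisor of degree $n(t) < g(t)$ gives $h^0(L) = t+4$, and Serre duality yields $h^1(L^{\otimes 2}) = h^0(2\sum_j a_j p_j - K_C) = 0$ since $\deg(2\sum_j a_j p_j - K_C) = 2n(t) - 2g(t) + 2 = -2t - 6 < 0$, whence $h^0(L^{\otimes 2}) = (t+4)(t+5)/2$. It therefore suffices to prove surjectivity, and by upper semicontinuity of rank, to exhibit a single configuration where the map is an isomorphism. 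Since $L$ is very ample for generic data, the embedding $C \hookrightarrow \mathbb{P}^{t+3}$ by $|L|$ identifies the kernel of the multiplication map with $H^0(\mathcal{I}_{C/\mathbb{P}^{t+3}}(2))$, and the problem becomes showing that this space vanishes.

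I would then induct on $t$. The base case $t = 0$ is the remark following Theorem \ref{main}: for a general $[C, p] \in \mathcal{M}_{5,1}$ the embedding by $|K_C - p|$ realizes $C$ as a smooth degree-$7$ curve in $\mathbb{P}^3$ lying on no quadric, because $[C]$ does not lie in the Brill--Noether divisor $BN^1_{5,3}$. For the inductive step, I would specialize the data so that the image in $\mathbb{P}^{t+3}$ becomes a reducible limit $X \cup Y$ with $Y$ a finite reduced set contained in a hyperplane $H \subset \mathbb{P}^{t+3}$. Applying Lemma \ref{Horace} with $m = 2$ yields
\begin{equation*}
0 \to \mathcal{I}_{X/\mathbb{P}^{t+3}}(1) \to \mathcal{I}_{X \cup Y/\mathbb{P}^{t+3}}(2) \to \mathcal{I}_{(X \cup Y) \cap H/H}(2) \to 0,
\end{equation*}
and the desired vanishing $H^0(\mathcal{I}_{X \cup Y}(2)) = 0$ follows from (i) $X$ being linearly nondegenerate in $\mathbb{P}^{t+3}$, so $H^0(\mathcal{I}_X(1)) = 0$, and (ii) the trace $(X \cup Y) \cap H$ imposing independent conditions on quadrics in $H \cong \mathbb{P}^{t+2}$. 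Arranging the specialization so that the trace matches the embedding of a general pointed curve of type $(g(t-1), n(t-1))$ reduces (ii) to the inductive hypothesis.

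The main difficulty is engineering such a specialization so that the two flanking vanishings in the Horace sequence align with the inductive hypothesis. The increments $g(t) - g(t-1) = t+2$ and $n(t) - n(t-1) = t+1$ are precisely those for which the dimension count matches on both levels of the Horace sequence; any other balance would either produce too many trace points (forcing them onto a quadric) or render $X$ linearly degenerate. Realizing this degeneration as a limit of smooth pointed curves with the prescribed multiplicities $a_j$, and carefully tracking how the multiplicities redistribute between the components of the limit, is the central technical challenge.
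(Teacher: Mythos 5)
Your setup (checking both sides have dimension $(t+4)(t+5)/2$, reformulating the statement as $H^0(\mathcal{I}_{C/\mathbb{P}^{t+3}}(2))=0$, and running a Horace-style degeneration) is the right framework, but the inductive step you propose cannot be realized, and the step you defer as ``the central technical challenge'' is precisely where the content of the proof lies. The obstruction is that a hyperplane trace of such a degeneration is never an instance of the theorem for $t-1$. If the limit is $X\cup Y$ with $Y$ a curve in the hyperplane $H$ meeting the residual component $X$ in $\nu$ nodes, then $\omega_{X\cup Y}\bigl(-\sum_j a_jp_j\bigr)$ restricts to $Y$ as $\omega_Y(\nu\cdot(\textnormal{nodes}))\bigl(-\sum_{p_j\in Y}a_jp_j\bigr)$: the nodes enter with a \emph{positive} sign, so the trace is embedded by a series of the form $K_Y+\sum q_j-\sum a_jp_j$, never by $K_Y-\sum a'_jp'_j$ with $\sum a'_j=n(t-1)$ (that would force $\nu=0$, i.e.\ a disconnected limit). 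Your write-up also has an internal inconsistency: you take $Y$ to be a finite reduced set, in which case the trace $(X\cup Y)\cap H$ is finite and cannot ``match the embedding of a general pointed curve of type $(g(t-1),n(t-1))$.'' Finally, the residual vanishing fails as well: the $n(t)-n(t-1)=t+1$ units of marked-point multiplicity that must leave $Y$ would sit on a rational bridge through $t+3$ nodes, whose restricted line bundle has degree $-2+(t+3)-(t+1)=0$; that component is contracted to a point, so $X$ is linearly degenerate and $H^0(\mathcal{I}_X(1))\neq 0$, killing the left-hand term of your Horace sequence.

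The paper circumvents exactly this by enlarging the family of statements: it first reduces to all multiplicity concentrated at a single point $p$, then works with line bundles of the form $\omega_C(\sum q_j-n(t)p)$, where the auxiliary positive points $q_j$ absorb the nodes created by degeneration. Two different degenerations are used: a rational curve attached at three points, which drops the genus by $2$ and introduces $q_1,q_2,q_3$ without changing the ambient $\mathbb{P}^{t+3}$; and, iterated $t$ times, a rational normal curve attached along $r$ points spanning a hyperplane, so that the residual component is nondegenerate with $H^1(\mathcal{I}(1))=0$ and Lemma \ref{Horace} drops the ambient dimension by one. The induction therefore terminates not at your $t=0$ base case but at a genus-$3$ curve in $\mathbb{P}^3$ embedded by the degree-$6$ bundle $K_C+q_1+q_2+q_3-p$, where the isomorphism is supplied by Green--Lazarsfeld. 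Without some such enlargement of the induction hypothesis, your argument does not close up.
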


\begin{proof}
Clearly, it is sufficient to prove the theorem for the special case where all points come together, i.e. it suffices to find a pointed curve $[C,p]\in \mathcal{M}_{g(t),1}$ such that
\begin{equation*}
\sym^2 H^0(K_C-n(t)p)\to H^0(K_C^{\otimes 2} - 2n(t)p)
\end{equation*}
is an isomorphism.

To prove this we use degeneration. Let $C'$ be a genus $g(t)-2$ curve and $q_1,q_2,q_3,p$ general points on it. We consider the stable pointed curve $[X,p]\in \overline{\mathcal{M}}_{g(t),1}$, which we obtain by gluing $C'$ with a rational curve $R'$ at the points $q_1,q_2,q_3$. We have the short exact sequence
\begin{equation*}
0\to \omega_X(-n(t)p)\to \omega_{\tilde{X}}(-n(t)p)\to \mathbb{C}_{q_1}\oplus \mathbb{C}_{q_2}\oplus \mathbb{C}_{q_3}\to 0,
\end{equation*}
where $\tilde{X}$ is the normalization of $X$ and the right most map is the difference of the residues of the differentials on $C'$ and $R'$. Therefore, the space of sections $H^0(\omega_X(-n(t)p))$ is equal to the kernel of the map
\begin{equation*}
H^0(\omega_{C'}(q_1+q_2+q_3-n(t)p))\oplus H^0(\omega_{R'}(q_1+q_2+q_3))\xrightarrow{\varphi} \mathbb{C}_{q_1}\oplus \mathbb{C}_{q_2}\oplus \mathbb{C}_{q_3}.
\end{equation*}
Moreover, $\ker (\varphi)$ can be identified with $H^0(\omega_{C'}(q_1+q_2+q_3-n(t)p))$, since for any section of $H^0(\omega_{C'}(q_1+q_2+q_3-n(t)p))$ with residues $\lambda_1,\lambda_2,\lambda_3$ at $q_1,q_2,q_3$, one can find and element of $H^0(\omega_{R'}(q_1+q_2+q_3))$ having residues $-\lambda_1,-\lambda_2,-\lambda_3$ at these points, so that these sections glue to give a section of $H^0(\omega_X(-n(t)p))$. 

Therefore, the invertible sheaf $\omega_X(-n(t)p)$ is base point free and gives a map to the projective space, whose image consists of the image of $C'\to \mathbb{P}^r$ under the linear system 
\begin{equation*}
|\omega_{C'}(q_1+q_2+q_3-n(t)p)| 
\end{equation*}
(that is, $r=\dim|\omega_{C'}(q_1+q_2+q_3-n(t)p)|$) and the 3-secant line $\overline{q_1,q_2,q_3}$ (embedded by the linear system $|\omega_{R'}(q_1+q_2+q_3)|$).

Since a quadric in $\mathbb{P}^r$ containing $C'$ automatically contains the 3-secant line, to prove the theorem it is sufficient to prove that the map
\begin{equation*}
\sym^2 H^0(K_{C'}+q_1+q_2+q_3-n(t)p)\to H^0(K_{C'}^{\otimes 2}+2q_1+2q_2+2q_3 - 2n(t)p)
\end{equation*}
is an isomorphism.

To prove this we degenerate further and consider the following stable curve:
We let $R''$ be a rational curve with $r+2$ marked points on it, which are labeled as $q_1,q_2, s_1,\dots , s_r$. Let $C''$ be a curve of genus $g(t)-r-1$ with marked points $q_3,p,s_1,\dots ,s_r$. We let $[X,q_1,q_2,q_3,p]\in \overline{\mathcal{M}}_{g(t)-2,4}$ be the stable curve, which we obtain by gluing $C''$ with $R''$ at the points labeled with $s_j$. Along the same lines of reasoning as above, we observe that the linear system $\omega_X(q_1+q_2+q_3-n(t)p)$ is base point free and its image in $\mathbb{P}^r$ can be described as follows: 

The image of $R''$ is a rational normal curve in $\mathbb{P}^r$ embedded via
\begin{equation*}
|\omega_{R''}(q_1+q_2+s_1+\dots + s_r)|
\end{equation*}
and $C''$ is embedded to the hyperplane $H\defeq \spann\{s_1,\dots ,  s_r\}$ via the linear series 
\begin{equation*}
|\omega_{C''}(q_3+s_1+\dots +s_r-n(t)p)|.
\end{equation*}
Since $C''$ lies in the hyperplane and $C''\cap R''=\{s_1,\dots ,s_r\}$, by Lemma \ref{Horace},
\begin{equation*}
H^1(\mathcal{I}_{X/\mathbb{P}^r}(2))=H^1(\mathcal{I}_{{C''}/H}(2)).
\end{equation*} 
That is, the original problem is now reduced to finding a general pointed curve $[C,p,q_1,\dots ,q_{r+1}]$ of genus $g(t)-r-1$ such that
\begin{equation*}
\sym^2 H^0(K_C+\sum_{j=1}^{r+1}q_j-n(t)p)\to H^0(K_C^{\otimes 2}+2\sum_{j=1}^{r+1}q_j - 2n(t)p)
\end{equation*}
is an isomorphism. 

Note that as opposed to the first degeneration, the latter one reduces the dimension of the projective space in consideration. Using this degeneration successively (that is, in the next step we consider a pointed curve $[C''',p,q_3,\dots,q_{r+1},s_1,\dots,s_{r-1}]$ of genus $g(t)-2r+1$ glued to a pointed rational curve $[R''',q_1,q_2,s_1,\dots,s_{r-1}]$ at the points with label $s_j$), we can reduce the question to a question in $\mathbb{P}^3$. Precisely, to prove the theorem it suffices to show that the map
\begin{equation*}
\sym^2 H^0(K_C+\sum_{j=1}^{n(t)+2}q_j-n(t)p)\to H^0(K_C^{\otimes 2}+2\sum_{j=1}^{n(t)+2}q_j - 2n(t)p)
\end{equation*}
is an isomorphism for a general pointed curve $[C,q_1,\dots ,q_{n(t)+2},p]$, where the genus of $C$ is 3. (That the number of the points $q_j$ is $n(t)+2$ and the genus is 3 can be computed using the formulas in (\ref{g(t),n(t)}) and the fact that we need precisely $t$ such degenerations, since $r=g(t)-n(t)-1=t+3$).

To prove this final statement we can specialize to the case where $q_j=p$ for $j=4,\dots , n(t)+2$ and show that 
\begin{equation}\label{Reduction to genus 3}
\sym^2 H^0(K_C+q_1+q_2+q_3-p)\to H^0(K_{C}^{\otimes 2}+2q_1+2q_2+2q_3 - 2p)
\end{equation}
is an isomorphism for a general pointed genus 3 curve $[C,q_1,q_2,q_3,p]$. (Note that the union of the image of $C$ via $|K_C+q_1+q_2+q_3-p|$ with the 3 secant line $\overline{q_1,q_2,q_3}$ is an element of $\overline{\mathcal{M}}_{5,1}$ which is the $t=0$ case of our problem).

This statement (which can be confirmed also directly) is true by \cite{GL}, since $\deg(K_C+q_1+q_2+q_3-p)=6$ and it is equal to $2g(C)+1-\textnormal{Cliff(C)}$ if $C$ is not hyperelliptic. Hence, (\ref{Reduction to genus 3}) is an isomorphism, if $C$ is not hyperelliptic.
\end{proof}
\begin{cor}
We have that $b_{0:s}(t)=\frac{s}{2}(st+s+t-1)$ for $s\geq 2$.
\end{cor}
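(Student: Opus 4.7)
The plan is to read off $b_{0:s}(t)$ from the formula for $\tilde b_{i:s}(t)$ derived in the preceding proof by specializing to $i=0$, and then to argue that in this particular boundary stratum the gap between $\tilde b_{0:s}(t)$ and $b_{0:s}(t)$ vanishes, so that the two coincide. Concretely, recall that $\tilde b_{i:s}(t)=b_{i:s}(t)-d_{i:s}(t)$, where $d_{i:s}(t)\geq 0$ measures the multiplicity with which $\Delta_{i:S}$ appears in $[D_1(\phi')]$ on top of its multiplicity in $\overline{\mathfrak{Quad}}_{g(t),n(t)}$. Thus to conclude $b_{0:s}(t)=\tilde b_{0:s}(t)$, we need $d_{0:s}(t)=0$, i.e. the vector bundle map (\ref{vbmap2}) should be generically non-degenerate over $\Delta_{0:S}$.

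First I would identify the fiber of (\ref{vbmap2}) over a general point of the test curve $T_{0:S}$. As discussed right after equation (\ref{Tilde b_{i:s}}), this fiber is precisely the multiplication map
\begin{equation*}
\sym^2 H^0\bigl(K_C-s\cdot q-\textstyle\sum_{j\in S^c}p_j\bigr)\xrightarrow{\phi'} H^0\bigl(K_C^{\otimes 2}-2s\cdot q-2\textstyle\sum_{j\in S^c}p_j\bigr)
\end{equation*}
for a general pointed curve $[C,q,\{p_j\}_{j\in S^c}]$ of genus $g(t)$. By Theorem \ref{Nondegenerateness over Delta 0:k} applied with the points $p_1,\dots,p_k$ being $q$ (with multiplicity $s$) together with the $p_j$ for $j\in S^c$, this map is an isomorphism. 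Hence $\Delta_{0:S}$ is not contained in $D_1(\phi')$, so $d_{0:s}(t)=0$ and consequently $b_{0:s}(t)=\tilde b_{0:s}(t)$.

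Finally, I would substitute $i=0$ into the closed form (\ref{Tilde b_{i:s}}), obtaining
\begin{equation*}
b_{0:s}(t)=\tilde b_{0:s}(t)=\tfrac{1}{2}\,s(st+s+t-1),
\end{equation*}
which is the claimed expression. The restriction $s\geq 2$ is exactly the range in which $\Delta_{0:S}$ actually is a boundary divisor (a rational tail with fewer than two marked points plus the node would be unstable), and it ensures $i<s$, so the derivation of (\ref{Tilde b_{i:s}}) is valid at $i=0$.

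There is essentially no hard step left: the content of the corollary is that the generic non-degeneracy established in Theorem \ref{Nondegenerateness over Delta 0:k} eliminates the correction $d_{0:s}(t)$, and the rest is a direct substitution in a formula already on hand.
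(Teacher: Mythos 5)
Your proposal is correct and follows essentially the same route as the paper: the paper likewise deduces $d_{0:s}(t)=0$ from the generic non-degeneracy of the map (\ref{vbmap2}) over $\Delta_{0:S}$ (established via Theorem \ref{Nondegenerateness over Delta 0:k} applied to the point $q$ with multiplicity $s$ together with the $p_j$, $j\in S^c$), and then sets $i=0$ in (\ref{Tilde b_{i:s}}). You merely spell out explicitly the step $b_{0:s}(t)=\tilde b_{0:s}(t)$ that the paper leaves implicit in the discussion preceding the corollary.
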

\begin{proof}
Setting $i=0$ in (\ref{Tilde b_{i:s}}) we obtain the claimed formula.
\end{proof}
Using the same methods as in the proof of Theorem \ref{Nondegenerateness over Delta 0:k}, we prove the following theorem, which finishes the computation of the coefficient of $\delta_{irr}$: 

\begin{thm}\label{Nondegenerateness over Delta irr}
The vector bundle map (\ref{vbmap}) is generically non-degenerate over $\Delta_{irr}$.
\end{thm}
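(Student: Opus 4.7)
The plan is to adapt the degeneration argument from the proof of Theorem \ref{Nondegenerateness over Delta 0:k}, treating the node of an irreducible nodal curve as a pair of points subject to a residue matching condition. A general element of $\Delta_{irr}$ has the form $[X, p_1, \ldots, p_{n(t)}]$ with $X$ irreducible of arithmetic genus $g(t)$ carrying one non-separating node, whose normalization $\nu : C \to X$ is smooth of genus $g(t)-1$; write $\nu^{-1}(\textnormal{node}) = \{q_1, q_2\}$. As before, I would first specialize the marked points to a single general point $p \in C$, so that it suffices to show that
\begin{equation*}
\sym^2 H^0(\omega_X(-n(t)p)) \longrightarrow H^0(\omega_X^{\otimes 2}(-2n(t)p))
\end{equation*}
is an isomorphism for a suitably general $[C, q_1, q_2, p]$. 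A Riemann--Roch computation on $X$ shows that the two sides have equal dimension $(t+4)(t+5)/2$, so it is enough to prove surjectivity.

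The normalization sequence $0 \to \omega_X \to \nu_*(\omega_C(q_1+q_2)) \to \mathbb{C}_{\textnormal{node}} \to 0$ identifies $H^0(\omega_X(-n(t)p))$ with the hyperplane in $H^0(\omega_C(q_1+q_2-n(t)p))$ cut out by the vanishing of the sum of residues at $q_1$ and $q_2$. The associated codimension-one sublinear system embeds $X$ into $\mathbb{P}^r$ (with $r = t+3$) as a nodal curve; since any quadric vanishing on $\nu(C)$ automatically passes through the node, the problem reduces to understanding quadrics through the image of $C$. I would then run through the same chain of degenerations as in the proof of Theorem \ref{Nondegenerateness over Delta 0:k}: attach a rational bridge to $C$ along three general points to introduce a 3-secant line, and apply Lemma \ref{Horace} to peel off a hyperplane, converting the problem into one on the hyperplane section with one fewer degeneration step to perform. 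After $t$ such iterations, the problem reduces to a statement on a genus 3 curve in $\mathbb{P}^3$ with two additional marked points $q_1, q_2$ subject to the residue condition, which should fall to Green--Lazarsfeld \cite{GL}, since the relevant line bundle has degree exceeding $2g+1-\textnormal{Cliff}(C)$ for a generic genus 3 curve.

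The main obstacle will be the consistent bookkeeping of the residue condition at $\{q_1,q_2\}$ throughout the iterated degeneration; in each application of Horace's lemma one must verify that the single linear condition on residues cuts the hyperplane section's linear series in the expected way, and that dimensions continue to match after the reduction step. Provided this compatibility holds, surjectivity of the codimension-one multiplication map follows from surjectivity of the full map on $H^0(\omega_C(q_1+q_2-n(t)p))$, together with the equality of source and target ranks established at the outset.
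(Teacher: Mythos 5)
Your overall strategy coincides with the paper's: pass to the normalization, observe that the multiplication map for the nodal curve $X$ is an isomorphism precisely when no quadric contains the image of $C$ under $|K_C+q_1+q_2-\sum p_j|$, and then run the Horace-type degenerations from the proof of Theorem \ref{Nondegenerateness over Delta 0:k}. However, two points need repair. First, the ``residue hyperplane'' you propose to track is illusory: a section of $\omega_C(q_1+q_2-n(t)p)$ has poles only at $q_1$ and $q_2$, so by the residue theorem the sum of its residues there vanishes automatically, and $H^0(\omega_X(-n(t)p))$ is \emph{all} of $H^0(\omega_C(q_1+q_2-n(t)p))$ (both spaces have dimension $t+4$, which is also the only way your $r=t+3$ is consistent). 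The codimension-one phenomenon lives on the target side instead: $H^0(\omega_X^{\otimes 2}(-2n(t)p))$ is the hyperplane of node-compatible sections of $(\omega_C(q_1+q_2-n(t)p))^{\otimes 2}$, so the ``full'' multiplication map on $C$ is never surjective (its source has dimension one less than its target), and your closing claim that surjectivity of the restricted map follows from surjectivity of the full one is both logically invalid (surjectivity does not pass to subspaces of the source) and aimed at the wrong statement. The reduction that actually costs nothing is simply: show that no quadric in $\mathbb{P}^{t+3}$ contains the image of $C$.

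Second, and more seriously, your endgame fails. No $3$-secant bridge is needed here---$q_1,q_2$ already supply the two extra points to be placed on the first rational normal curve, and in any case the $3$-secant trick is a separate move from the hyperplane-peeling step of Lemma \ref{Horace}, not to be combined with it. Running the Horace step $t$ times from $\mathbb{P}^{t+3}$ down to $\mathbb{P}^3$, but starting from genus $g(t)-1$ rather than the genus $g(t)-2$ of Theorem \ref{Nondegenerateness over Delta 0:k}, the terminal case is a general genus $4$ curve with the degree $7$ system $|K_C+q_1+q_2-p|$, not a genus $3$ curve with a degree $6$ system. For genus $4$ one has $2g+1-\textnormal{Cliff}(C)=8>7$, so Green--Lazarsfeld is inapplicable at the true endpoint. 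The paper closes the argument differently: the genus $4$ septic in $\mathbb{P}^3$ is exactly the $t=0$ instance of the theorem over $\Delta_{irr}$, where lying on a quadric is equivalent to carrying a (limit) $g^1_3$, and the classical description of $BN^1_{5,3}=8\lambda-\delta_0-4\delta_1-6\delta_2$ shows that a general irreducible one-nodal curve of arithmetic genus $5$ is Brill--Noether general, hence its image under $|\omega_X-p|$ lies on no quadric. Without this base-case input, or some substitute for it, your induction does not terminate.
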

\begin{proof}
To prove the theorem it is sufficient to exhibit a curve $C$ of genus $g(t)-1$ with marked points $q_1,q_2,p_1,\dots, p_{n(t)}$ such that the image of $C$ under the map given by $|K_C+q_1+q_2-p_1-\dots -p_{n(t)}|$ does not lie on any quadric. To prove this we will use the same type of degenerations that we used in the proof of Theorem \ref{Nondegenerateness over Delta 0:k}. We let $[X,q_1,q_2,p_1,\dots,p_{n(t)}]\in \overline{\mathcal{M}}_{g(t)-1,n(t)+2}$ be the stable curve which we obtain by gluing a pointed rational curve $[R,q_1,q_2,s_1,\dots,s_r]$ with a pointed genus $g(t)-r$ curve $[C,s_1,\dots,s_r,p_1,\dots,p_{n(t)}]$ at the points with the same labels $s_j$ (As before $r=g(t)-n(t)-1=t+3$). The image of $X$ under $|\omega_X(q_1+q_2-p_1-\dots -p_{n(t)})|$ is again the union of the rational curve $R$ embedded to $\mathbb{P}^r$ via
\begin{equation*}
|\omega_R(q_1+q_2+s_1+\dots +s_r)|
\end{equation*}
with the curve $C$ embedded to the hyperplane $H\defeq\spann\{s_1,\dots,s_r\}$ via
\begin{equation*}
|\omega_C+s_1+\dots+s_r-p_1-\dots-p_{n(t)}|.
\end{equation*}
By Lemma \ref{Horace}, we obtain $H^1(\mathcal{I}_{X/\mathbb{P}^r}(2))=H^1(\mathcal{I}_{{C}/H}(2))$, which again reduces the problem showing that $C\subseteq H$ does not lie on any quadrics. As in the proof of Theorem \ref{Nondegenerateness over Delta 0:k}, we keep degenerating in this manner until the question is reduced to proving that for a general pointed genus 4 curve $[C,q_1,\dots,q_{n(t)+1},p_1,\dots,p_{n(t)}]$ the image of $C$ under the linear system
\begin{equation*}
|K_C+q_1+\dots+q_{n(t)+1}-p_1-\dots-p_{n(t)}| 
\end{equation*}
does not lie on any quadrics. Specializing to the case $q_j=p_j$ for $j=1,\dots,n(t)-1$ reduces our problem to finding a pointed genus 4 curve $[C,q_1,q_2,p]$ such that the image of $C$ under $|K_C+q_1+q_2-p|$ does not lie on any quadrics, which we already know, since this is the $t=0$ case of our problem and in that particular case $C$ lies on a quadric only if it is Brill-Noether special as we indicated earlier in the introduction.
\end{proof}

\begin{rem}
Note that as opposed to other degeneration arguments, we did not address the problem of smoothability of the degenerate curves in the proofs of Theorem \ref{Nondegenerateness over Delta 0:k} and Theorem \ref{Nondegenerateness over Delta irr}, as the degenerate curves we consider here are already elements of the moduli space in consideration.
\end{rem}
\begin{rem}
With the same methods used in the proofs above, one can prove a finer version of the maximal rank conjecture in the case $g-r+d=1$. Namely, one can prove that for a general curve $C$ of genus $g$ and general points $p_1,\dots ,p_k$ the map
\begin{equation*}
\sym^2 H^0(K_C-\sum_{j=1}^k a_jp_j)\to H^0(K_C^{\otimes 2} - \sum_{j=1}^k 2a_jp_j)
\end{equation*}
is of maximal rank for any choice of natural numbers $a_1,\dots ,a_k$. We did not modify the proof to cover also these cases only because it would complicate the numerology in the proof further and we will not need this fact in what follows.
\end{rem}
We have proven that the vector bundle map (\ref{vbmap2}) is generically non-degenerate over $\Delta_{0:S}$, but this is no longer true over $\Delta_{1:S}$. In order to compute the coefficients $b_{1:s}(t)$ precisely (rather than just giving a lower bound for it), one needs a finer analysis of the limit points of $\overline{\mathfrak{Quad}}_{g(t),n(t)}$ inside the boundary of $\overline{\mathcal{M}}_{g,n}$. We will use limit linear series to carry out this analysis. The limiting behaviour of very similar multiplication maps over moduli spaces has been successfully studied using limit linear series in the papers \cite{EH2}, \cite{FP} and \cite{Fa2}. Here we will adapt the ideas developed in these papers to our situation. We start with some definitions.

\begin{defn}
Given a pointed smooth curve $[C,p]$ and a line bundle $L$ on it, we define the vector space $W_k(p,L)$ of symmetric tensors of $L$ with vanishing \mbox{order $\geq k$} at $p$ as follows:
We let $(a_0^L(p),\dots,a_r^L(p))$ be the vanishing sequence of $L$ at $p$ and $\{\sigma_0,\sigma_1,\dots ,\sigma_r\}\subseteq H^0(L)$ be a basis such that
\begin{equation*}
\ord_p(\sigma_i)=a_i^L(p).
\end{equation*}
Then we define
\begin{equation*}
W_k(p,L)\defeq \spann\{\sigma_i\sigma_j\mid a_i^L(p)+a_j^L(p)\geq k\}\subseteq \sym^2 H^0(L).
\end{equation*}
Moreover, for $\rho\in \sym^2 H^0(L)$ we define $\ord_p(\rho)=k$ if $\rho\in W_k(p,L)\setminus W_{k+1}(p,L)$.
\end{defn}

\begin{lem}
The definition of $W_k(p,L)$ is independent of the chosen basis.
\end{lem}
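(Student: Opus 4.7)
The plan is to give an intrinsic, basis-free description of $W_k(p,L)$ and then verify that the formula in the definition reproduces this description for every admissible basis. Introduce the vanishing filtration on $H^0(L)$,
\begin{equation*}
F^a H^0(L) := \{\sigma \in H^0(L) : \ord_p(\sigma)\geq a\},
\end{equation*}
which depends only on $p$ and $L$. Then define
\begin{equation*}
W'_k(p,L) := \sum_{a+b\geq k} \im\bigl(F^a H^0(L)\otimes F^b H^0(L)\to \sym^2 H^0(L)\bigr).
\end{equation*}
This subspace is manifestly independent of any choice of basis, so it suffices to show that $W_k(p,L)=W'_k(p,L)$ for any basis $\{\sigma_0,\dots,\sigma_r\}$ satisfying $\ord_p(\sigma_i)=a_i^L(p)$.

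First I would prove the easy inclusion $W_k(p,L)\subseteq W'_k(p,L)$: by construction $\sigma_i\in F^{a_i^L(p)}H^0(L)$, so any generator $\sigma_i\sigma_j$ with $a_i^L(p)+a_j^L(p)\geq k$ sits inside $F^{a_i^L(p)}\cdot F^{a_j^L(p)}\subseteq W'_k(p,L)$.

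For the reverse inclusion the key observation is that
\begin{equation*}
\{\sigma_i : a_i^L(p)\geq a\}
\end{equation*}
is a basis of $F^a H^0(L)$. Indeed, these vectors lie in $F^a$ and are linearly independent, and a standard count shows $\dim F^a=\#\{i : a_i^L(p)\geq a\}$, since $\dim F^a/F^{a+1}\leq 1$ with equality precisely when $a$ appears in the vanishing sequence. Given $\sigma\in F^a$, $\sigma'\in F^b$ with $a+b\geq k$, writing each in this basis and expanding the product $\sigma\sigma'\in \sym^2 H^0(L)$ gives a linear combination of $\sigma_i\sigma_j$ with $a_i^L(p)+a_j^L(p)\geq a+b\geq k$, which lies in $W_k(p,L)$. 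Thus $W'_k(p,L)\subseteq W_k(p,L)$, and the two spaces coincide.

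No serious obstacle is expected: the whole argument is a standard filtration computation, and the only point requiring care is the dimension count for the graded pieces $F^a/F^{a+1}$, which is immediate from the very definition of the vanishing sequence. As a by-product, since the subspaces $W_k(p,L)$ are intrinsic, the function $\ord_p(\rho)$ on $\sym^2 H^0(L)$ introduced in the definition is also well-defined, independent of the chosen basis.
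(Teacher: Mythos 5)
Your proof is correct and rests on the same key fact as the paper's: any section with $\ord_p\geq a_i^L(p)$ lies in the span of $\sigma_i,\dots,\sigma_r$, so a product of filtered sections expands into generators $\sigma_m\sigma_n$ whose vanishing orders sum to at least as much. The paper phrases this as a direct change-of-basis computation ($\sigma_i'=\sum_{\ell\geq i}\lambda_\ell\sigma_\ell$), while you package it as an intrinsic description via the vanishing filtration --- which has the minor added benefit of making $\ord_p(\rho)$ manifestly well-defined --- but the argument is essentially the same.
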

\begin{proof}
If we let $\{\sigma_0',\sigma_1',\dots ,\sigma_r'\}\subseteq H^0(L)$ be another basis with the property that $\ord_p(\sigma_i')=a_i^L(p)$ then clearly 
\begin{equation*}
\sigma_i'=\sum_{\ell=i}^{r}\lambda_\ell\sigma_\ell,\quad \lambda_\ell\in\mathbb{C}.
\end{equation*}
Therefore $\sigma_i'\sigma_j'$ can be written as a linear combination of symmetric tensors $\sigma_m\sigma_n$ where $m\geq i$ and $n\geq j$.
\end{proof}
Using very similar ideas as in \cite{EH1}, we (locally) construct a space of ``limit quadrics", which coincides with $\mathfrak{Quad}_{g(t),n(t)}$ in the smooth locus of $\overline{\mathcal{M}}_{g(t),n(t)}$ and has a concrete geometric description for its elements in the boundary:

\begin{thm}\label{Extension over Delta 1:k}
For $\emptyset \neq S\subseteq\{1,\dots,n(t)\}$, let $[E,q',\{p_j\mid j\in S\}]$ be a general pointed genus one curve and $[C,\{p_j\mid j\in S^c\}]$ a general pointed genus $g(t)-1$ curve. We let $q\in C$ and fix the nodal curve
\begin{equation*}
X_0\defeq [C\cup_{q\sim q'} E,p_1,\dots,p_{n(t)}]\in\overline{\mathcal{M}}_{g(t),n(t)}.
\end{equation*}
We further let
\begin{equation*}
\pi: X\to B,\quad \sigma_j:B\to X\textnormal{ for }j=1,\dots, n(t)
\end{equation*}
be the versal deformation space of $[X_0,p_1,\dots,p_{n(t)}]$ with $\pi^{-1}(0)=X_0$ and \mbox{$\sigma_j(0)=p_j$}. Then there exists a scheme $\mathcal{Q}\subseteq B$, which is singled out by the following geometric conditions:

If $b\in \mathcal{Q}$ and $X_b$ is smooth then the multiplication map
\begin{equation}\label{sym2 map at smooth fibers}
\sym^2 H^0(K_{X_b}-\sum_{j=1}^{n(t)}\sigma_j(b))\to H^0(K_{X_b}^{\otimes 2}-2\sum_{j=1}^{n(t)}\sigma_j(b))
\end{equation}
is not an isomorphism. If $b\in \mathcal{Q}$ and $X_b$ is a singular curve obtained by gluing $[C',q,\{\sigma_j(b)\mid j\in S^c \}]$ and $[E',q',\{\sigma_j(b)\mid j\in S\}]$ at the marked points $q$ and $q'$ then the map
\begin{equation}\label{W3 map}
W_3(q,K_{C'}-(s-1)q-\sum_{j\in S^c}\sigma_j(b))\to H^0(K_{C'}^{\otimes 2}-(2s+1)q-2\sum_{j\in S^c}\sigma_j(b))
\end{equation}
is not an isomorphism.

Moreover, every irreducible component of $\mathcal{Q}$ has dimension $\geq \dim B-1$.
\end{thm}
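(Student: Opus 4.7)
My approach is modeled on the limit linear series arguments of Eisenbud--Harris \cite{EH1}, as used to study multiplication maps in \cite{EH2}, \cite{FP} and \cite{Fa2}. On a codimension-$\geq 2$ open subset of $B$, I would introduce a line bundle $\mathscr{M}$ on $X\to B$ equal to $\omega_{X/B}(-\sum_j \sigma_j(B))$ off the boundary, but twisted along the elliptic-tail component of the central fiber (analogous to $\mathscr{L}'$ in (\ref{vbmap2})) so that both $\mathcal{V} := \pi_*(\mathscr{M})$ and $\mathcal{W} := \pi_*(\mathscr{M}^{\otimes 2})$ are locally free of the expected ranks $t+4$ and $\binom{t+5}{2}$. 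Define $\mathcal{Q}$ as the first degeneracy locus of the symmetric multiplication map $\phi:\sym^2 \mathcal{V} \to \mathcal{W}$. Since $\sym^2 \mathcal{V}$ and $\mathcal{W}$ have equal rank, $\mathcal{Q}$ is locally cut out by the single vanishing of $\det\phi$, and Porteous gives that every irreducible component of $\mathcal{Q}$ has codimension $\leq 1$ in $B$, establishing the dimension bound.

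For $b\in B$ with $X_b$ smooth, $\mathcal{V}_b$ and $\mathcal{W}_b$ identify canonically with the source and target of (\ref{sym2 map at smooth fibers}) and $\phi(b)$ with that multiplication, so the first geometric condition is immediate. For $b$ in the boundary stratum parametrizing $X_b = C'\cup_{q\sim q'} E'$, I would use limit linear series on $X_b$ attached to $\mathscr{M}$. The aspect on the elliptic tail $E'$ has vanishing sequence at $q'$ forced by the genus-one geometry and by the fact that exactly $s$ marked points lie on $E'$; this matches the aspect on $C'$ via the compatibility $a_i^{C'}(q)+a_{r-i}^{E'}(q')\geq d$, forcing $l_{C'}$ to have a prescribed vanishing sequence at $q$. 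Under the chosen twist, $\mathcal{V}_b$ gets identified with $H^0(C',K_{C'}-(s-1)q-\sum_{j\in S^c}\sigma_j(b))$, and tracking the matching of symmetric tensors across the node one sees that tensors with vanishing $<3$ at $q$ are automatically matched by sections coming from the $E'$-component. The genuinely new obstruction to surjectivity is therefore carried by $W_3(q,\,K_{C'}-(s-1)q-\sum_{j\in S^c}\sigma_j(b))$, mapping into sections of $K_{C'}^{\otimes 2}-(2s+1)q-2\sum_{j\in S^c}\sigma_j(b)$, which is precisely the map (\ref{W3 map}).

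The main obstacle is this last identification: explicitly producing the aspect $l_{E'}$, computing its vanishing sequence at $q'$, and proving that the subspace of $\sym^2\mathcal{V}_b$ accounted for automatically by sections on $E'$ is exactly the orthogonal complement of $W_3$. This comes down to a careful residue/gluing analysis on the elliptic component together with vanishing-sequence bookkeeping on both aspects, following the template of \cite{EH2}, \cite{FP} and \cite{Fa2}; once it is carried out, degeneracy of $\phi(b)$ translates into the failure of (\ref{W3 map}) to be an isomorphism, completing the characterization of $\mathcal{Q}$.
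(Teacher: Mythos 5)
There is a genuine gap in your first paragraph, and it is not the kind that can be repaired by the ``careful residue/gluing analysis'' you defer to the end: no single twist $\mathscr{M}$ of $\omega_{X/B}(-\sum_j\sigma_j(B))$ along the elliptic tail produces a map $\sym^2\pi_*\mathscr{M}\to\pi_*(\mathscr{M}^{\otimes 2})$ of equal-rank bundles over $B$ whose degeneracy along $\Delta$ is the condition (\ref{W3 map}). Take the $C$-aspect twist, so that $\mathscr{M}$ restricts on the central fibre to $K_{C}-(s-1)q-\sum_{j\in S^c}p_j$ on $C$ and to a line bundle of degree $1$ on $E$. Both pushforwards are then locally free of the expected ranks $t+4$ and $\binom{t+5}{2}$, so local freeness is not the issue; but $h^0(\mathscr{M}|_{E})=1$, so every product of sections restricts on $E$ to a multiple of the square of the unique section, i.e.\ lands in a fixed line inside the two-dimensional space $H^0(\mathscr{M}^{\otimes 2}|_{E})$. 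Hence the fibre of $\sym^2\pi_*\mathscr{M}\to\pi_*(\mathscr{M}^{\otimes 2})$ over every point of $\Delta$ has image in a proper subspace and is degenerate regardless of whether (\ref{W3 map}) is an isomorphism; the degeneracy locus contains all of $\Delta$ and carries no information there. The $E$-aspect twist fails symmetrically, and twisting only the target by $-3\mathscr{E}_q$ (which is what is needed to land in $H^0(K_{C'}^{\otimes 2}-(2s+1)q-2\sum_{j\in S^c}\sigma_j(b))$) destroys the map from $\sym^2\pi_*\mathscr{M}$, since products of sections need not vanish to order $3$ along the exceptional component. So the bundle map you want does not exist over $B$, and with it goes the one-line Porteous argument for the dimension bound.

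The paper's construction is genuinely different: it works on the fibre product $F$ of the projective frame bundles of the two aspect pushforwards $\pi_*L_C$ and $\pi_*L_E$ with $L_E=L_C(-(t+3)\mathscr{E}_q)$, cuts out $F'\subseteq F$ by the Eisenbud--Harris compatibility equations $\tilde\sigma_j^C\tau_C^{t+3-j}=\tilde\sigma_j^E\tau_E^{j}$, and builds a rank-$\binom{t+5}{2}$ bundle $S$ on $F'$ out of the tautological line bundles, taking the summands $\tilde\sigma_j^C\otimes\tilde\sigma_k^C$ for $j+k\geq 3$ and $\tilde\sigma_j^E\otimes\tilde\sigma_k^E$ for $j+k<3$, each mapped into $\pi_*L_C^{\otimes 2}(-3\mathscr{E}_q)$ after multiplication by suitable powers of $\tau_C,\tau_E$. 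The degeneracy locus $\tilde{\mathcal{Q}}$ lives on $F'$, not on $B$, and the bound $\dim\mathcal{Q}\geq\dim B-1$ requires two separate counts: $(t+3)(t+4)$ equations cutting out $F'$ plus one determinantal equation, and an upper bound of $(t+3)(t+4)$ on the fibre dimension of $\tilde{\mathcal{Q}}\to B$, the latter using the generality of the two pointed curves to force $\ord_q(\sigma_j^C)=j$ and $\ord_{q'}(\sigma_j^E)=t+3-j$. Your intuition that $W_3$ carries the obstruction while the low-order tensors are absorbed by the $E$-side matches the outcome of this analysis, but without the frame-bundle construction there is no space on which that statement can be formulated; the ``main obstacle'' you postpone is the construction of $\mathcal{Q}$ itself, not bookkeeping on top of it.
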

\begin{proof}
We let $\Delta\subseteq B$ be the locus where the node $q$ of $X_0$ is not smoothed and let $\mathscr{C}_q$ and $\mathscr{E}_q$ be the components of $\pi^{-1}(\Delta)$ containing $C\setminus q$ and $E\setminus q'$, respectively. By shrinking $B$, if necessary, we can assume that $\mathcal{O}_X(\mathscr{C}_q+\mathscr{E}_q)\cong\mathcal{O}_X$. We let
\begin{equation*}
L_C\defeq \omega_{\pi}(-s\cdot \mathscr{E}_q-\sum_{j=1}^{n(t)}\sigma_j(B))
\end{equation*}
and
\begin{equation}\label{fixed isomorphism}
L_E\defeq L_C(-(t+3)\cdot\mathscr{E}_q),
\end{equation}
where as before $t+3=g(t)-n(t)-1$. Note that the twists for the bundles are chosen in such a way that over $X_0$ the space of sections can be identified as follows:
\begin{equation*}
H^0(L_C|_{X_0})=H^0\left(K_C-(s-1)q-\sum_{j\in S^c}p_j\right)
\end{equation*}
and
\begin{equation*}
H^0(L_E|_{X_0})=H^0\left(\mathcal{O}_E((s+t+4)q'-\sum_{j\in S}p_j)\right).
\end{equation*}
We let $F_C\to B$ and $F_E\to B$ be the bundle of projective frames of the vector bundles $\pi_*L_C$ and $\pi_*L_E$ and we consider
\begin{equation*}
F\defeq F_C\times_B F_E.
\end{equation*}
The space $F$ parametrizes the data  $[b,\{\sigma_j^C\}_{j=0}^{t+3},\{\sigma_j^E\}_{j=0}^{t+3}]$, where $\{\sigma_j^C\}_{j=0}^{t+3}$ and $\{\sigma_j^E\}_{j=0}^{t+3}$ are ordered bases of the fibers of $\pi_*{L_C}$ and $\pi_*{L_E}$ at $b\in B$ up to scalars. We fix sections $\tau_C\in\mathcal{O}_X(\mathscr{C}_q)$ and $\tau_E\in \mathcal{O}_X(\mathscr{E}_q)$ that only vanish on $\mathscr{C}_q$ and $\mathscr{E}_q$, respectively. We denote by $\tilde{\sigma}_j^C$ and $\tilde{\sigma}_j^E$ the tautological bundles on $F$, whose fibers over each point are the 1-dimensional vector spaces corresponding to the frame with the same symbol. We define a subscheme $F'\subseteq F$ subject to the conditions
\begin{equation}\label{EH equations}
\tilde{\sigma}_j^C\cdot\tau_C^{t+3-j}=\tilde{\sigma}_j^E\cdot \tau_E^j
\end{equation}
as sections of the bundles
\begin{equation*}
\pi_*L_C((t+3-j)\cdot\mathscr{C}_q)\cong\pi_*L_E(j\cdot\mathscr{E}_q),
\end{equation*}
where the isomorphism is induced by (\ref{fixed isomorphism}) and the isomorphism $\mathcal{O}_X(\mathscr{C}_q+\mathscr{E}_q)\cong\mathcal{O}_X$. The resulting space $F'$ parametrizes the data $[b,\{\sigma_j^C\}_{j=0}^{t+3},\{\sigma_j^E\}_{j=0}^{t+3}]$, where $\sigma_j^C$ and $\sigma_j^E$ are identified if $b\in B\setminus \Delta$ and if $b\in \Delta$ then
\begin{equation*}
ord_q(\sigma_j^C)\geq j\textnormal{ and }ord_{q'}(\sigma_j^E)\geq t+3-j.
\end{equation*}
Note by (\ref{EH equations}) that the section $\tilde{\sigma}_j^C$ vanishes at least $j$ times along $\mathscr{E}_q$. Thus, we have an injective map
\begin{equation*}
\tilde{\sigma}_j^C\hookrightarrow\pi_*{L_C}(-j\cdot\mathscr{E}_q).
\end{equation*}
Similarly, we have that
\begin{equation*}
\tilde{\sigma}_j^E\hookrightarrow\pi_*{L_E}(-(t+3-j)\cdot\mathscr{C}_q).
\end{equation*}
Therefore for $j+k\geq 3$, we have maps
\begin{equation*}
\tilde{\sigma}_j^C\otimes \tilde{\sigma}_k^C\hookrightarrow \pi_*{L_C}(-j\cdot\mathscr{E}_q)\otimes \pi_*{L_C}(-k\cdot\mathscr{E}_q)\to \pi_*{L_C^{\otimes 2}}(-(j+k)\cdot\mathscr{E}_q)\to \pi_*{L_C^{\otimes 2}}(-3\cdot\mathscr{E}_q),
\end{equation*}
where the middle map is the usual multiplication map and the last map is induced by multiplying sections with $\tau_E^{j+k-3}$. On the other hand, for $j+k< 3$, we have maps
\begin{align*}
\tilde{\sigma}_{j}^E\otimes \tilde{\sigma}_k^E &\hookrightarrow \pi_*{L_E}(-(t+3-j)\cdot\mathscr{C}_q)\otimes \pi_*{L_E}(-(t+3-k)\cdot\mathscr{C}_q)\\
&\to \pi_*{L_E^{\otimes 2}}(-(2t+6-j-k)\cdot\mathscr{C}_q)\to\pi_*{L_E^{\otimes 2}}(-(2t+3)\cdot\mathscr{C}_q)\cong \pi_*{L_C^{\otimes 2}}(-3\cdot\mathscr{E}_q),
\end{align*}
where similarly the last map is multiplying sections with $\tau_C^{3-j-k}$ and the isomorphism is induced by (\ref{fixed isomorphism}). Next, we define the vector bundle
\begin{equation*}
S\defeq\left(\bigoplus_{\substack{j+k\geq 3 \\ k\geq j}} \tilde{\sigma}_j^C\otimes \tilde{\sigma}_k^C\right)\oplus \left(\bigoplus_{\substack{j+k<3 \\ k\geq j}}\tilde{\sigma}_{j}^E\otimes \tilde{\sigma}_k^E\right), 
\end{equation*}
and consider the vector bundle map
\begin{equation*}
\phi:S\to \pi_*{L_C^{\otimes 2}}(-3\cdot\mathscr{E}_q),
\end{equation*}
which at fibers is the map that takes the quadratic polynomials given by the individual direct summands of $S$ and evaluates their sum under the multiplication map. Note that due to the identifications (\ref{EH equations}) the fiber of this vector bundle map at a point $b\in B\setminus\Delta$ is the map in (\ref{sym2 map at smooth fibers}). Next, we describe the fiber over $b=0$. First note that the fiber of $\pi_*{L_C^{\otimes 2}}(-3\cdot\mathscr{E}_q)$ over $0$ is identified by the vector subspace of sections in
\begin{equation*}
H^0\left(K_C^{\otimes 2}-(2s+1)q-2\sum_{j\in S^c}\sigma_j(b)\right)\oplus H^0\left(\mathcal{O}_E((2s+5)q'-2\sum_{j\in S}\sigma_j(b))\right)
\end{equation*}
that are compatible at the node $q$.

The direct summands $\tilde{\sigma}_{j}^E\otimes \tilde{\sigma}_k^E$ in $S$ are multiplied by non-trivial powers of $\tau_C$ (since $3-j-k> 0$) as described above. Therefore, the sections of $\pi_*{L_C^{\otimes 2}}(-3\cdot\mathscr{E}_q)$ that are in the image of the map
\begin{equation*}
\tilde{\sigma}_{j}^E\otimes \tilde{\sigma}_k^E\to \pi_*{L_C^{\otimes 2}}(-3\cdot\mathscr{E}_q)
\end{equation*}
restrict to zero on $C$ and on $E$ they restrict to sections of 
\begin{equation*}
H^0\left(\mathcal{O}_E((2s+5)q'-2\sum_{j\in S}\sigma_j(b))\right),
\end{equation*}
that vanish at $q'$. Arguing in the same way, we observe that for $j+k>3$, sections that are in the image of
\begin{equation*}
\tilde{\sigma}_{j}^C\otimes \tilde{\sigma}_k^C\to \pi_*{L_C^{\otimes 2}}(-3\cdot\mathscr{E}_q)
\end{equation*}
restrict to zero on $E$ and on $C$ they restrict to sections of 
\begin{equation*}
H^0\left(K_C^{\otimes 2}-(2s+1)q-2\sum_{j\in S^c}\sigma_j(b)\right),
\end{equation*}
that vanish at $q$. The images of the remaining direct summands, $\tilde{\sigma}_{0}^C\otimes \tilde{\sigma}_3^C$ and $\tilde{\sigma}_{1}^C\otimes \tilde{\sigma}_2^C$ in $\pi_*{L_C^{\otimes 2}}(-3\cdot\mathscr{E}_q)$ restrict to sections on $E$ and $C$ that are compatible at the node $q\sim q'$.

It is elementary to observe that the fiber of $\phi$ at $0\in B$ always surjects onto the sections on $E$. Therefore, $\phi$ fails to be an isomorphism over $0\in B$ if and only if the map (\ref{W3 map}) is not an isomorphism. 

We define $\tilde{\mathcal{Q}}\subseteq F'$ as the locus where the map $\phi$ fails to be an isomorphism and let $\mathcal{Q}$ be the image of $\tilde{\mathcal{Q}}$ under the morphism $F\to B$. 

To estimate the dimension of $\mathcal{Q}$, first observe that the fibers of $F$ are isomorphic to two copies of the projective linear group of a vector space of dimension $t+4$. Therefore,
\begin{equation*}
\dim F=\dim B+2(t+3)(t+4).
\end{equation*}
Each of the conditions in (\ref{EH equations}) is a single equation on the elements of a projective bundle with fibers isomorphic to $\mathbb{P}^{t+3}$. Therefore, each of them imposes $t+3$ conditions. The determinantal condition on $\phi$ clearly imposes (at most) one condition. Thus, we have the estimate that every irreducible component of $\tilde{\mathcal{Q}}$ has dimension at least
\begin{equation*}
\dim F-(t+3)(t+4)-1=\dim B+(t+3)(t+4)-1.
\end{equation*}
To finish the proof, we need to show that the fiber dimension of $\tilde{\mathcal{Q}}\to B$ is at most $(t+3)(t+4)$. This is clear over $b\in B\setminus \Delta$, since in this case the frames are identified and the fiber of $F'\to B$ is isomorphic to a single copy of $\mathbb{P}GL_{t+4}$. Over $b\in \Delta$, we have the same estimate on the fiber dimension, because by the generality of the pointed elliptic curve $[E,q',\{p_j\mid j\in S\}]$, we have that 
\begin{equation*}
H^0\left(\mathcal{O}_E(s\cdot q'-\sum_{j\in S}p_j)=0\right),
\end{equation*}
which forces $ord_{q'}(\sigma_j^E)= t+3-j$ for all $j$. Similarly, by the generality of the pointed curve $[C,\{p_j\mid j\in S^c\}]$, we have that $ord_q(\sigma_j^C)=j$ for all $j$ (We are disregarding the case where $s=n(t)$ and $q$ is a Weierstrass point of $C$, because it plays no role in the dimension count). An elementary dimension count now shows that the possible frames $\{\sigma_j^C\}_{j=0}^{t+3},\{\sigma_j^E\}_{j=0}^{t+3}$ subject to conditions 
\begin{equation*}
ord_q(\sigma_j^C)+ord_{q'}(\sigma_{t+3-j}^E)=t+3,
\end{equation*}
depend on $(t+3)(t+4)$ parameters.
\end{proof}

Note that by Theorem \ref{Extension over Delta 1:k}, we have the necessary condition that if the pointed nodal curve $[X_0,p_1,\dots,p_{n(t)}]\in \overline{\mathfrak{Quad}}_{g(t),n(t)}$ then the map (\ref{W3 map}) fails to be an isomorphism. To show that this is also sufficient, one has to rule out the possibility that an irreducible component of $\mathcal{Q}$ lies in the boundary. Since we have already shown that the dimension of every irreducible component of $\mathcal{Q}$ is at least $\dim\overline{\mathcal{M}}_{g(t),n(t)}-1$, we can exclude this possibility by checking that the map (\ref{W3 map}) is generically non-degenerate over the boundary divisors $\Delta_{1:S}$. This is the content of the next theorem.
\begin{thm}\label{Nondegenerateness over Delta 1:k}
For $\emptyset\neq S\subseteq \{1,\dots , n(t)\}$ and a general pointed genus $g(t)-1$ curve $[C,q,\{p_j\mid j\in S^c\}]\in\mathcal{M}_{g(t)-1,S^c\cup\{q\}}$ the map
\begin{equation*}
W_3(q,K_C-\sum_{j\in S^c}p_j-(s-1)q)\to H^0(K_C^{\otimes 2} -2\sum_{j\in S^c}p_j-(2s+1)q)
\end{equation*}
is an isomorphism.
\end{thm}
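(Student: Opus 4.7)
The plan is to follow the iterated degeneration scheme developed in the proofs of Theorems \ref{Nondegenerateness over Delta 0:k} and \ref{Nondegenerateness over Delta irr}, adapting Lemme d'Horace (Lemma \ref{Horace}) to the $W_3$-multiplication map. By semicontinuity it suffices to treat the case in which the points $\{p_j\mid j\in S^c\}$ have all been specialized to coincide at a single general point $p\in C$. I would then degenerate $C$ to a reducible nodal curve $X=C'\cup R$, where $R$ is a smooth rational curve meeting $C'$ transversally at $r=t+3$ general points $s_1,\dots,s_r$, with $q$ and $p$ carried by $C'$. Under the complete linear system of the corresponding twist of the dualizing sheaf on $X$, the component $R$ embeds as a rational normal curve in $\mathbb{P}^{t+3}$, while $C'$ is embedded into the hyperplane $H\defeq\spann\{s_1,\dots,s_r\}\cong\mathbb{P}^{t+2}$ via a twisted canonical series.

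Since $q\in C'\subseteq H$, Lemme d'Horace identifies the failure of the multiplication map for $X$ to be an isomorphism with the corresponding failure for $C'$ inside $H$, and the condition of vanishing to order $3$ at $q$ descends from $X$ to $C'$; the subspace $W_3(q,L)$ on $X$ restricts to the analogous $W_3$ subspace on $C'$. This furnishes an inductive step replacing the original problem with the analogous $W_3$-problem on a curve of strictly smaller genus in a projective space of strictly smaller dimension. Iterating this degeneration exactly $t$ times (so as to land in $\mathbb{P}^3$) brings the statement down to a general pointed genus $4$ curve. After specializing all the auxiliary marked points accumulated along the way to cancel against copies of $p$, the base case becomes the claim that
\begin{equation*}
W_3(q,K_C)\to H^0(K_C^{\otimes 2}(-3q))
\end{equation*}
is an isomorphism for a general pointed genus $4$ curve $[C,q]$.

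Both sides of the base-case map have dimension $6$, so it suffices to establish injectivity. The kernel of the full multiplication map $\sym^2 H^0(K_C)\to H^0(K_C^{\otimes 2})$ is one-dimensional, spanned by the unique quadric $Q\subseteq\mathbb{P}^3$ cutting out the canonical image of $C$. For a general $q\in C$ the quadric $Q$ is smooth at $q$, so in any basis $\sigma_0,\dots,\sigma_3$ of $H^0(K_C)$ adapted to $q$ (with $\ord_q\sigma_i=i$), when we write $Q=\sum_{i,j}c_{ij}\sigma_i\sigma_j$ at least one of the coefficients $c_{00},c_{01},c_{02},c_{11}$ is nonzero. This precisely says $Q\notin W_3(q,K_C)$, so the restriction of the multiplication map to $W_3$ has trivial kernel and the map is an isomorphism.

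The hard part will be to verify carefully that the $W_3$-structure at $q$ is preserved through the successive degenerations. One must check that the vanishing sequence at $q$ of the relevant twisted linear series is the generic $(0,1,\dots,t+3)$ at every stage of the induction, that the identification of $W_3$ subspaces survives passage to the nodal curve, and that the hypotheses of Lemme d'Horace (genericity of the $s_i$'s, non-containment of components in $H$) continue to hold throughout. A secondary technical issue is arranging the attaching configuration so that the chosen twist of the dualizing sheaf induces a complete linear system at every stage, so that $R$ embeds as a rational normal curve and the Horace identification genuinely applies.
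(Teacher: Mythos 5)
Your overall strategy --- specialize the points $p_j$, run an iterated Horace degeneration, and finish with an explicit low-genus computation --- is exactly the paper's, but there are two genuine gaps. First, the base case is misidentified. In each Horace step the rational component $R$ must be attached to the main component at $r$ points \emph{and} carry two further marked points, so that $\omega_R(q_1+q_2+s_1+\dots+s_r)$ has degree $r$ and embeds $R$ as a rational normal curve in $\mathbb{P}^r$ not contained in $H=\spann\{s_1,\dots,s_r\}$; with no marked points on $R$ its image has degree $r-2$ and is forced into $H$, so Lemma \ref{Horace} does not apply. Those two points per step have to be manufactured, which is what the paper's initial non-Horace degeneration (attaching a rational bridge at three points, which also makes the twisted canonical system non-special) is for. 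Once you do the bookkeeping with that initial step included, the chain terminates not at a genus $4$ canonical curve but at a genus $2$ curve with four residual points and line bundle $K_C+q_1+q_2+q_3+q_4-q$ (degree $5$, $r=3$), which is the paper's endpoint. Your genus-$4$ endpoint with $L=K_C$ is not reachable by the scheme as you describe it: dropping the initial step to fix the point count leaves the very first Horace step without marked points to place on $R$.

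Second, even granting the genus-$4$ base case, the key implication is a non sequitur. Smoothness of $Q$ at $q=[1:0:0:0]$ says that not all of $c_{01},c_{02},c_{03}$ vanish (given $c_{00}=0$), which does \emph{not} force one of $c_{00},c_{01},c_{02},c_{11}$ to be nonzero: the Segre quadric $x_0x_3-x_1x_2$ is everywhere smooth and lies entirely in $W_3$. The correct statement is that, expanding $Q|_C\equiv 0$ in a local parameter at $q$, the order $\leq 2$ terms give $c_{00}=c_{01}=0$ and a relation forcing $c_{02}$ and $c_{11}$ to vanish simultaneously; hence $Q\in W_3(q,K_C)$ if and only if $Q$ contains the tangent line $T_qC=\{x_2=x_3=0\}$, and one must then argue that the unique quadric through the canonical genus-$4$ curve contains $T_qC$ only for the finitely many $q$ that are ramification points of the two $g^1_3$'s. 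So your conclusion is true but your justification fails. The paper's base case argument is genuinely different and worth comparing: on the genus $2$, degree $5$ model the unique quadric is the rank-$4$ quadric swept out by the pencils $|K_C|$ and $|q_1+q_2+q_3+q_4-q|$, each with vanishing sequence $(0,1)$ at $q$, so the quadric has order exactly $2$ at $q$ and cannot lie in $W_3$.
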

\begin{proof}
Clearly it is sufficient to specialize to the case where the points $p_j=q$ for all $j\in S^c$ and prove that the map
\begin{equation*}
W_3(q,K_C-(n(t)-1)q)\to H^0(K_C^{\otimes 2}- 2(n(t)-1)q)
\end{equation*}
is an isomorphism for a general element $[C,q]\in\mathcal{M}_{g(t)-1,1}$.

To prove this statement, we follow the same steps of successive degenerations as in the proof of Theorem \ref{Nondegenerateness over Delta 0:k}, which (skipping the details) reduces the question to prove that there exists a pointed curve $[C,q,q_1,q_2,q_3,q_4]\in \mathcal{M}_{2,5}$ such that
\begin{equation*}
W_3(q,K_C+q_1+q_2+q_3+q_4-q)\to H^0(K_C^{\otimes 2}+2q_1+2q_2+2q_3+2q_4-2q)
\end{equation*}
is an isomorphism. 

To see this, note that if we choose the points $q,q_1,q_2,q_3,q_4\in C$ to be general, then the image of $C$ under $|K_C+q_1+q_2+q_3+q_4-q|$ is contained in a unique (rank 4) quadric, which correspond to the pencils $|K_C|$ and $|q_1+q_2+q_3+q_4-q|$, both of which have the vanishing type $(0,1)$ at the point $q$. That is, the tangent space of the quadric has multiplicity 2 at $q$ and therefore the quadric is not an element of $W_3(q,K_C+q_1+q_2+q_3+q_4-q)$.
\end{proof}

\begin{cor}
We have that $b_{1:0}(t)=t+4$ and $b_{1:1}(t)=4$.
\end{cor}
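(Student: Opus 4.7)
The plan is to combine the test-curve recurrence that was used in the $s\geq 2$ cases with the refined local picture of $\overline{\mathfrak{Quad}}_{g(t),n(t)}$ near $\Delta_{1:S}$ supplied by Theorems~\ref{Extension over Delta 1:k} and~\ref{Nondegenerateness over Delta 1:k}. Writing $[D_1(\phi')]=\overline{\mathfrak{Quad}}_{g(t),n(t)}+\sum_{i,s} d_{i:s}(t)\sum_{|S|=s}\delta_{i:S}$ with $d_{i:s}(t)\geq 0$ the multiplicity with which $\Delta_{i:S}$ is contained in the degeneracy locus of $\phi'$, set $\tilde b_{1:s}(t)=b_{1:s}(t)-d_{1:s}(t)$. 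The first task is to compute $\tilde b_{1:s}(t)$ for $s=0,1$: even though Lemma~\ref{Intersection Numbers} is stated only for $i<s$, its Grothendieck--Riemann--Roch proof goes through verbatim when $i=1\geq s$, since the correction term $(i-s-1)\cdot\delta_{i:S\cup\{n+1\}}$ in $\mathscr{L}'$ is simply absent for such indices and the remaining correction divisors do not meet a generic fibre of $T_{1:S}$. Plugging into the formal expression (\ref{Tilde b_{i:s}}) then yields $\tilde b_{1:0}(t)=1$ and $\tilde b_{1:1}(t)=2$.

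The second, more delicate step is to pin down $d_{1:s}(t)$ from the generic behavior of $\phi'$ along $\Delta_{1:S}$. For $s=0$ the source fibre splits as $V\oplus W$ with $V=H^0(\omega_C(-\sum_{j} p_j))$ and $W=H^0(\omega_E)\cong\mathbb{C}$; every element of the cross summand $V\otimes W\subset\sym^2(V\oplus W)$ is killed by $\phi'$ because each such product restricts to zero on both branches of the node, producing a $(t+3)$-dimensional generic kernel and, by a transversality check, $d_{1:0}(t)=t+3$. For $s=1$ the summand $W$ vanishes, since for a generic pointed elliptic curve $H^0(\mathcal{O}_E(q'-p_j))=0$, so the naive corank of $\phi'$ along $\Delta_{1:\{j\}}$ is only $1$; the extra multiplicity $d_{1:1}(t)=2$ must therefore come from a non-transversal second-order vanishing of $\det\phi'$ in the smoothing direction of the node. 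This is precisely what the refined local model $\mathcal{Q}$ of Theorem~\ref{Extension over Delta 1:k} captures: the bundle $S$ together with the compatibility equations (\ref{EH equations}) cuts out the correct $W_3$-locus on $\Delta_{1:\{j\}}$, and the length-$2$ discrepancy with $D_1(\phi')$ can be read off from this construction, with Theorem~\ref{Nondegenerateness over Delta 1:k} guaranteeing that $\mathcal{Q}$ is genuinely proper on $\Delta_{1:\{j\}}$.

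Combining the two steps yields
\begin{equation*}
b_{1:0}(t)=\tilde b_{1:0}(t)+d_{1:0}(t)=1+(t+3)=t+4,\qquad b_{1:1}(t)=\tilde b_{1:1}(t)+d_{1:1}(t)=2+2=4.
\end{equation*}
The principal obstacle is the $s=1$ case: unlike $s=0$, where the multiplicity $d_{1:0}$ matches the generic corank of $\phi'$ and the argument is straightforwardly transversal, for $s=1$ the extra unit of multiplicity only becomes visible at the infinitesimal level through the model $\mathcal{Q}$, and cleanly extracting this second-order contribution from the frame-compatibility equations (\ref{EH equations}) is the delicate part.
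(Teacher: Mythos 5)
Your proposal does not follow the paper's argument, and as written it has two genuine gaps. First, the extension of Lemma~\ref{Intersection Numbers} to the range $i=1\geq s$ does not ``go through verbatim'': when $s\leq i$ and $n(t)-s\leq g(t)-i$ the twisting divisor in the definition of $\mathscr{L}'$ is absent, so along $\Delta_{1:S}$ with $s\in\{0,1\}$ the map $\phi'$ coincides with the original $\phi$, whose degeneracy class was already computed in Theorem~\ref{Lambda,Psi,Deltairr} to be $(8-t)\lambda+t\sum\psi_j-\delta$; hence the correct value is $\tilde b_{1:s}(t)=1$ for $s=0$ \emph{and} $s=1$, not the values $1$ and $2$ you obtain by substituting into formula (\ref{Tilde b_{i:s}}), which was derived using the twisted Chern-class computation valid only for $i<s$. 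Your intermediate numbers for $s=1$ are therefore inconsistent with the actual structure: the true discrepancy is $d_{1:1}(t)=3$, and the agreement of your final answer $2+2=4$ is an artifact of two compensating errors. Second, and more seriously, the multiplicities $d_{1:0}(t)$ and $d_{1:1}(t)$ are never actually computed. The generic corank of $\phi$ along a boundary divisor does not determine the multiplicity of that divisor in $[D_1(\phi)]$ (one needs the order of vanishing of the determinant in the smoothing direction of the node), and the claim that the ``length-$2$ discrepancy can be read off'' from the construction of $\mathcal{Q}$ in Theorem~\ref{Extension over Delta 1:k} is an assertion of exactly what would have to be proven; that theorem builds a separate space of limit quadrics and says nothing about the scheme structure of $D_1(\phi')$ along $\Delta_{1:S}$.

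The paper avoids all of this by a much shorter route: it considers the gluing map $\nu:\overline{\mathcal{M}}_{1,2}\to\overline{\mathcal{M}}_{g(t),n(t)}$ attaching a fixed general pointed curve of genus $g(t)-1$ to a varying $[E,p,q']\in\overline{\mathcal{M}}_{1,2}$. Theorem~\ref{Nondegenerateness over Delta 1:k} shows the image is not contained in $\overline{\mathfrak{Quad}}_{g(t),n(t)}$, and in fact $\nu^*\bigl(\overline{\mathfrak{Quad}}_{g(t),n(t)}\bigr)=0$, which yields the relation
\begin{equation*}
(8-t)\lambda-\delta_{irr}+t\,\psi_p+b_{1:1}(t)\,\psi_{q'}-b_{1:0}(t)\,\delta_{0:\{p,q'\}}=0
\end{equation*}
in $\pic(\overline{\mathcal{M}}_{1,2})$; the standard relations $12\lambda=\delta_{irr}$ and $\psi_p=\psi_{q'}=\lambda+\delta_{0:\{p,q'\}}$ then force $b_{1:1}(t)=4$ and $b_{1:0}(t)=t+4$. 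If you want to salvage your approach, you would need to replace the appeal to (\ref{Tilde b_{i:s}}) by the correct value $\tilde b_{1:s}(t)=1$ and then genuinely compute the order of vanishing of $\det\phi$ transverse to $\Delta_{1:S}$, which is precisely the delicate analysis the paper's pullback argument is designed to sidestep.
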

\begin{proof}
We consider the gluing map
\begin{equation*}
\nu:\overline{\mathcal{M}}_{1,2}\to \overline{\mathcal{M}}_{g(t),n(t)}
\end{equation*}
that attaches a general genus $g(t)-1$ curve $[C,q,p_1,\dots,p_{n(t)-1}]$ to $[E,p,q']\in\overline{\mathcal{M}}_{1,2}$ by identifying the points $q$ and $q'$. By Theorem \ref{Nondegenerateness over Delta 1:k}, we have that 
\begin{equation*}
\nu^*\left(\overline{\mathfrak{Quad}}_{g(t),n(t)}\right)=0.
\end{equation*}
Thus we get the relation
\begin{equation}\label{b_(1:0),b_(1:1) relation}
(8-t)\cdot\lambda -\delta_{irr}+t\cdot\psi_p+b_{1:1}(t)\cdot\psi_{q'}-b_{1:0}(t)\cdot\delta_{0:\{p,q'\}}=0
\end{equation}
in $\pic(\overline{\mathcal{M}}_{1,2})$. Among the classes $\lambda,\psi_p,\psi_{q'},\delta_{irr},\delta_{0:\{p,q'\}},$ we have the following relations (see \cite{AC}): 
\begin{equation*}
12\lambda=\delta_{irr}\quad \textnormal{and}\quad \psi_p=\psi_{q'}=\lambda+\delta_{0:\{p,q'\}}.
\end{equation*}
Using these, we can rewrite the relation (\ref{b_(1:0),b_(1:1) relation}) as
\begin{equation*}
(b_{1:1}(t)-4)\cdot \lambda+(b_{1:1}(t)-b_{1:0}(t)+t)\cdot \delta_{0:\{p,q'\}}=0,
\end{equation*}
from which the statement clearly follows.
\end{proof}
\begin{cor}
We have that $b_{1:s}(t)= \frac{1}{2} (s^2t+s^2 -st + s + 6)$ for $s\geq 1$.
\end{cor}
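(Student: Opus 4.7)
The plan is to induct on $s\geq 1$ using the same type of gluing argument that handled $b_{1:0}(t)$ and $b_{1:1}(t)$, with $s=1$ serving as the base case from the preceding corollary. For the inductive step with $s\geq 2$, I would fix $S\subseteq\{1,\dots,n(t)\}$ of cardinality $s$ and consider the gluing map
\[
\nu:\overline{\mathcal{M}}_{1,s+1}\longrightarrow\overline{\mathcal{M}}_{g(t),n(t)}
\]
that attaches a fixed general pointed curve $[C,q,\{p_j:j\in S^c\}]\in\mathcal{M}_{g(t)-1,S^c\cup\{q\}}$ to the varying $[E,q',\{p_j:j\in S\}]\in\overline{\mathcal{M}}_{1,s+1}$ by identifying $q\sim q'$.

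The first step is to identify $\nu^*\overline{\mathfrak{Quad}}_{g(t),n(t)}\in\pic(\overline{\mathcal{M}}_{1,s+1})$ via the limit-quadric description of Theorem \ref{Extension over Delta 1:k}. Iterating the successive degenerations used in the proof of Theorem \ref{Nondegenerateness over Delta 1:k} on the reducible limit curves arising over the boundary divisors of $\overline{\mathcal{M}}_{1,s+1}$, one checks the generic non-degeneracy of the associated multiplication maps and thereby pins down $\nu^*\overline{\mathfrak{Quad}}_{g(t),n(t)}$ as an explicit effective combination of boundary classes; this class vanishes when $s=1$, but is generally nontrivial for $s\geq 2$.

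Next I would pull back the formula of Theorem \ref{main} term by term, using $\nu^*\lambda=\lambda$, $\nu^*\delta_{irr}=\delta_{irr}$, $\nu^*\psi_j=\psi_{p_j}$ for $j\in S$ (vanishing otherwise), the self-intersection $\nu^*\delta_{1:S}=-\psi_{q'}$, the boundary pullbacks $\nu^*\delta_{1:T}=\delta_{0:(S\setminus T)\cup\{q'\}}$ for $T\subsetneq S$, and $\nu^*\delta_{0:T}=\delta_{0:T}$ for $T\subseteq S$ with $|T|\geq 2$. Substituting the standard Picard relations $12\lambda=\delta_{irr}$ and $\psi_i=\lambda+\sum_{T\ni i,\,|T|\geq 2}\delta_{0:T}$ on $\overline{\mathcal{M}}_{1,s+1}$ and equating with the class from the first step converts the resulting identity into a linear system in the Picard generators. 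Reading off the coefficient of a well-chosen class (for instance $\lambda$ or the deepest stratum $\delta_{0:S\cup\{q'\}}$) and inserting the inductive values $b_{1:s'}(t)$ for $s'<s$ together with the previously established $b_{0:s}(t)=\frac{s}{2}(st+s+t-1)$ yields a single linear equation whose unique solution is the claimed value of $b_{1:s}(t)$.

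The principal difficulty lies in the first step: determining exactly which boundary divisors of $\overline{\mathcal{M}}_{1,s+1}$ are swept into $\overline{\mathfrak{Quad}}_{g(t),n(t)}$ under $\nu$, and with what multiplicities, demands a careful analysis of the limit-quadric scheme on the reducible loci furnished by Theorem \ref{Extension over Delta 1:k}, extending the non-degeneracy argument of Theorem \ref{Nondegenerateness over Delta 1:k} to the further degenerate curves $C\cup R$ that appear as fibers of $\nu$ over $\delta_{0:T\cup\{q'\}}$.
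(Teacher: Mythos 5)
Your route is genuinely different from the paper's, which never pulls back to a family of varying elliptic tails: there one intersects the class formula with the test curve $T_{1:S}$ (the point of attachment moving on the fixed genus $g(t)-1$ curve), obtaining the recurrence (\ref{C1k relations}) between $b_{1:s}(t)$ and $b_{1:s+1}(t)$, evaluates its left-hand side as the Porteous class of the bundle map $\theta$ over $C\times C$ built from the $W_3$-description of Theorem \ref{Extension over Delta 1:k}, and solves the recurrence starting from $b_{1:1}(t)=4$. The difficulty you flag in your first step is not a technical one that more care would resolve: the claim that $\nu^*\overline{\mathfrak{Quad}}_{g(t),n(t)}$ is an effective combination of \emph{boundary} classes of $\overline{\mathcal{M}}_{1,s+1}$ is false for $s\geq 2$, and with it the whole extraction scheme collapses. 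Indeed, $\pic(\overline{\mathcal{M}}_{1,s+1})\otimes\mathbb{Q}$ is freely generated by $\lambda$ and the $\delta_{0:T}$, with $\delta_{irr}=12\lambda$ and $\psi_i=\lambda+\sum_{T\ni i}\delta_{0:T}$; using exactly the pullback formulas you list, the coefficient of $\lambda$ in $\nu^*\overline{\mathfrak{Quad}}_{g(t),n(t)}$ comes out to
\begin{equation*}
(8-t)-12+ts+b_{1:s}(t)=\tfrac{1}{2}(t+1)(s-1)(s+2),
\end{equation*}
the last equality using the value of $b_{1:s}(t)$ being proved. An effective integral sum of boundary classes has $\lambda$-coefficient $12m$ with $m\in\mathbb{Z}_{\geq 0}$, since only $\delta_{irr}$ contributes to $\lambda$; but already for $s=2$, $t=1$ the displayed quantity equals $4$. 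Hence $\nu^{-1}\left(\overline{\mathfrak{Quad}}_{g(t),n(t)}\right)$ must contain divisors meeting the interior $\mathcal{M}_{1,s+1}$.

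The geometric source of these interior components is visible in the proof of Theorem \ref{Extension over Delta 1:k}: the identification of $\overline{\mathfrak{Quad}}_{g(t),n(t)}\cap\Delta_{1:S}$ with the $W_3$-failure locus --- the very fact that lets you say the generic fibre of $\nu$ misses $\overline{\mathfrak{Quad}}_{g(t),n(t)}$ because that condition depends only on the genus $g(t)-1$ side --- uses the generality hypothesis $H^0(\mathcal{O}_E(s\cdot q'-\sum_{j\in S}p_j))=0$. For $s\geq 2$ the locus where this fails, namely $\sum_{j\in S}p_j\sim s\cdot q'$ on $E$, is a divisor in the \emph{interior} of $\mathcal{M}_{1,s+1}$, and it is precisely along such loci that your fibre of $\Delta_{1:S}\to\overline{\mathcal{M}}_{g(t)-1,S^c\cup\{q\}}$ can (and, by the computation above, must) meet $\overline{\mathfrak{Quad}}_{g(t),n(t)}$. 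Your proposed analysis only inspects the degenerate fibres of $\nu$ over $\delta_{irr}$ and the $\delta_{0:T}$, so it cannot detect these components; accounting for them would require both a fresh limit-linear-series analysis at the non-generic elliptic tails and the class of the resulting pointed Brill--Noether divisor on $\overline{\mathcal{M}}_{1,s+1}$, at which point the method is no simpler than the paper's. Even setting this aside, the identity in $\pic(\overline{\mathcal{M}}_{1,s+1})$ carries one more unknown multiplicity than it has independent coefficient equations, so at least one multiplicity would have to be pinned down by a separate geometric argument before anything could be read off.
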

\begin{proof}
Intersecting the test curve $T_{1:S}$ with $\overline{\mathfrak{Quad}}_{g(t),n(t)}$ we obtain the relation
\begin{equation}\label{C1k relations}
T_{1:S}\cdot\overline{\mathfrak{Quad}}_{g(t),n(t)}=t(n(t)-s)+(2g(t)-4+n(t)-s)b_{1:s}(t)-(n(t)-s)b_{1:s+1}(t).
\end{equation}
The construction of the space $\mathcal{Q}$ in Theorem (\ref{Extension over Delta 1:k}) can be carried out with obvious modifications in the special case, where $q\in C$ and one of the marked points $p_j\in C$ come together (See the computation in the next paragraph for the counterpart of the condition (\ref{W3 map}) in this case). This enables us to compute the left hand side of the equation (\ref{C1k relations}):

We consider the maps
\begin{center}

\begin{tikzcd}[column sep={{{{4em,between origins}}}}]
 & C\times C\arrow{dl}[swap]{\pi_1}\arrow{dr}{\pi_2} & \\
 C & & C
\end{tikzcd}
\end{center}
and let $\Delta\defeq\{(p,p)\in C\times C\mid p\in C\}$ and 
\begin{equation*}
L\defeq {\pi_1}^*\left(\omega_C(-\sum_{j\in S^c}p_j)\right)\otimes \mathcal{O}(-(s-1)\Delta).
\end{equation*} 
The intersection number $T_{1:S}\cdot \overline{\mathfrak{Quad}}_{g(t),n(t)}$ is equal to the class of the degeneracy locus of the vector bundle map
\begin{equation*}
W_3(\sym^2 {\pi_2}_*L)\xrightarrow{\theta} {\pi_2}_*\left(L^{\otimes 2}\otimes \mathcal{O}(-3\Delta)\right).
\end{equation*}
The bundle $W_3(\sym^2 {\pi_2}_*L)$ sits naturally in the following exact sequences:
\begin{equation*}
0\to \sym^2 {\pi_2}_*\left(L(-2\Delta)\right)\to W'\to {\pi_2}_*\left(L(-2\Delta)\right)\otimes {\pi_2}_*\left( L\otimes\mathcal{I}_\Delta/\mathcal{I}^2_\Delta\right) \to 0,
\end{equation*}
and
\begin{equation*}
0\to W'\to W_3(\sym^2 {\pi_2}_*L)\to {\pi_2}_*\left(L(-3\Delta)\right)\otimes {\pi_2}_*\left( L\otimes\mathcal{O}_\Delta\right)\to 0.
\end{equation*}
Using these exact sequences and applying G-R-R and Porteous formula, we compute
\begin{equation*}
[D_1(\theta)]=\frac{1}{2} \left(s^2 \left(t^3+6 t^2+13 t+8\right)-2 s \left(t^3+4 t^2+4 t-3\right)+t^3+8 t^2+29 t+34\right).
\end{equation*}
Using this equation we solve the recurrence relation (\ref{C1k relations}) and obtain that
\begin{equation*}
b_{1:s}(t)=\frac{1}{2} (s^2 t+s^2 -st + s + 6).
\end{equation*}
\end{proof}
\section{Kodaira Dimension of $\overline{\mathcal{M}}_{g,n}$}
In this section we use Theorem \ref{main} to obtain some results about the Kodaira dimension of $\overline{\mathcal{M}}_{g,n}$. The canonical class of $\overline{\mathcal{M}}_{g,n}$ is well known to be
\begin{equation*}
K_{\overline{\mathcal{M}}_{g,n}}=13\lambda-2\delta_{irr}+ \sum_{j=1}^n\psi_j-2\sum_{\substack{S\in P\\|S|\geq 2}}\delta_{0:S}-3\sum_{S\in P}\delta_{1:S}-2\sum_{i=2}^{\left \lfloor{g/2}\right \rfloor}\sum_{S\in P}\delta_{i:S},
\end{equation*}
where $P$ denotes the power set of $\{1,\dots,n\}$. Since the class $\sum_{j=1}^n\psi_j$ is \emph{big}, expressing the canonical divisor $K_{\overline{\mathcal{M}}_{g,n}}$ as a positive linear combination of $\sum_{j=1}^n\psi_j$ and other effective divisors implies that $\overline{\mathcal{M}}_{g,n}$ is of general type.
\begin{proof}[Proof of Theorem \ref{GenTypeThms}]
We consider the map
\begin{equation*}
\nu_{i,j}:\overline{\mathcal{M}}_{16,8}\to \overline{\mathcal{M}}_{17,10}
\end{equation*}
that attaches an elliptic curve with two marked points to the point labeled by $i$ and a rational curve with two marked points to the point labeled by $j$.

We want to pullback the divisor $\overline{\mathfrak{Quad}}_{17,10}$ via this gluing map. To ensure that we obtain an effective divisor on $\overline{\mathcal{M}}_{16,8}$ this way, one needs to check that the map
\begin{equation*}
W_3(p_i,K_C-p_i-2p_j-\sum_{\ell\neq i,j}^8 p_\ell)\to H^0(K_C^{\otimes 2} -2p_i-4p_j-2\sum_{\ell\neq i,j}^8 p_\ell )
\end{equation*}
is an isomorphism for a general element $[C,p_1,\dots,p_8]\in\mathcal{M}_{16,8}$, so that the image of $\nu_{i,j}$ is not contained in $\overline{\mathfrak{Quad}}_{17,10}$. This clearly follows from Theorem \ref{Nondegenerateness over Delta 1:k}.

Using the pullback formulas in \cite{AC}, we compute
\begin{equation*}
\nu_{i,j}^*\left(\overline{\mathfrak{Quad}}_{17,10}\right)=5\lambda+3\sum_{\ell\neq i,j}\psi_\ell-\delta_{irr}+9\psi_i+10\psi_j-\dots
\end{equation*}

We compute this pullback for every choice of markings $\{i,j\}\subseteq \{1,\dots,8\}$ and take the average of the resulting divisors to obtain the effective class
\begin{equation*}
D_{16,8}=40\lambda+ 37\sum_{j=1}^8\psi_j -8\delta_{irr}-\dots
\end{equation*}
Next, we consider the pullback of the effective divisor $\mathcal{Z}_{16}\subseteq\overline{\mathcal{M}}_{16}$, which is defined as the closure of the locus of curves $[C]\in\mathcal{M}_{16}$ that are contained in a quadric under the map given by a $g^7_{21}$. The class of $\mathcal{Z}_{16}$ is computed in \cite{Fa2}:
\begin{equation*}
\mathcal{Z}_{16}=407\lambda-61\delta_{irr}-\dots
\end{equation*}
Using these two classes, the canonical class of $\mathcal{M}_{16,8}$ can be written as
\begin{equation*}
K_{\overline{\mathcal{M}}_{16,8}}=\frac{13}{272}\sum_{j=1}^8\psi_j +  \frac{7}{272}D_{16,8}+\frac{1}{34}\mathcal{Z}_{16}+E,
\end{equation*}
where $E$ an effective divisor supported on $\overline{\mathcal{M}}_{16,8}\setminus (\mathcal{M}_{16,8}\cup \Delta_{irr})$.

To obtain an analogous description for $K_{\overline{\mathcal{M}}_{17,8}}$, we consider the map
\begin{equation*}
\nu_{i,j}:\overline{\mathcal{M}}_{17,8}\to \overline{\mathcal{M}}_{17,10}
\end{equation*}
that attaches a genus zero curve with two marked points to each of the points labeled by $i$ and $j$. By Theorem \ref{Nondegenerateness over Delta 0:k}, we have that
\begin{equation*}
\nu_{i,j}(\overline{\mathcal{M}}_{17,8})\not\subseteq\overline{\mathfrak{Quad}}_{17,10}.
\end{equation*}
Therefore, the class
\begin{equation*}
\nu_{i,j}^*\left(\overline{\mathfrak{Quad}}_{17,10}\right)=5\lambda+3\sum_{\ell\neq i,j}\psi_\ell-\delta_{irr}+10\psi_i+10\psi_j-\dots
\end{equation*}
is an effective divisor on $\overline{\mathcal{M}}_{17,8}$. As before, we apply this procedure for every $\{i,j\}\subseteq \{1,\dots,8\}$ and take the average to obtain the divisor
\begin{equation*}
D_{17,8}=20\lambda+19\sum_{j=1}^8\psi_j-4\delta_{irr}-\dots
\end{equation*}
The pullback of the Brill-Noether divisor on $\overline{\mathcal{M}}_{17}$ to $\overline{\mathcal{M}}_{17,8}$ has the class
\begin{equation*}
BN_{17}=20\lambda-3\delta_{irr}-\dots
\end{equation*}
Using these, we can write
\begin{equation*}
K_{\overline{\mathcal{M}}_{17,8}}=\frac{1}{20}\sum_{j=1}^8\psi_j+\frac{1}{20}D_{17,8}+\frac{3}{5}BN_{17}+E,
\end{equation*}
where as before $E$ is an effective divisor supported on $\overline{\mathcal{M}}_{17,8}\setminus (\mathcal{M}_{17,8}\cup \Delta_{irr})$.

Finally for $\overline{\mathcal{M}}_{12,10}$, we use two divisor classes computed by Farkas and Verra. The first one is defined as the closure of the locus of curves in $\mathcal{M}_{12}$, which are contained in a quadric under the map given by a $g^4_{14}$. The class of this divisor is computed in \cite{FV1}:
\begin{equation*}
\overline{\mathfrak{D}}_{12} = 13245\lambda - 1926 \delta_{irr} - \dots
\end{equation*}
The second one is given as the closure of the locus of pointed curves $[C,p_1,\dots,p_{10}]$ in $\mathcal{M}_{12,10}$, where the curve admits a map $C\to \mathbb{P}^1$ of degree $11$, where the points $p_1,\dots p_{10}$ lie in a fiber. The class of this divisor is computed in \cite{FV2}:
\begin{equation*}
\overline{\mathcal{F}}_{12,10} = 9\sum_{j=1}^{10} \psi_j - \delta_{irr} - \dots 
\end{equation*}
Now the canonical class of $\overline{\mathcal{M}}_{12,10}$ can be written as
\begin{equation*}
K_{\overline{\mathcal{M}}_{12,10}}=\frac{59}{4415}\sum_{j=1}^{10} \psi_j + \frac{13}{13245}\overline{\mathfrak{D}}_{12} + \frac{484}{4415}\overline{\mathcal{F}}_{12,10} + E,
\end{equation*}
with $E$ being an effective divisor supported on $\overline{\mathcal{M}}_{12,10}\setminus (\mathcal{M}_{12,10}\cup \Delta_{irr})$.
\end{proof}

\bibliographystyle{amsalpha}


\end{document}